\newtheorem{theorem}{Theorem}[section]
\newtheorem{prop}[theorem]{Proposition}
\newtheorem{lemma}[theorem]{Lemma}
\newtheorem{corollary}[theorem]{Corollary}
\newtheorem{obs}[theorem]{Observation}
\newtheorem{prob}[theorem]{Problem}
\newtheorem{claim}{Claim}[theorem]
\newtheorem{definition}[theorem]{Definition}
\def\myheads#1;#2;{
\pagestyle{myheadings}
\markboth{{\sc\hfill #1\hfill\protect\makebox[0cm][r]{\rm\today}}}
{{\sc\protect\makebox[0cm][l]{\rm\today}\hfill #2\hfill}}
}
\newif\ifdeveloping
\newif\ifcommented
\newcommand{\chr}{\text{Chr}}
\newcommand{\vareps}{\varepsilon}
\newcommand{\ul}[1]{\underline{#1}}
\newcommand{\da}[1]{{#1}^\downarrow}
\newcommand{\ua}[1]{{#1}^\uparrow}
\newcommand{\comm}[1]{}
\renewcommand{\comm}[1]{
\fbox{\fbox{\begin{minipage}{300pt}#1\end{minipage}}
}}
\newcommand{\smf}{\hspace{0.008 cm}^\smallfrown}
\newcommand{\mf}[1]{\mathfrak{#1}}
\newcommand{\uhp}{\upharpoonright}
\newcommand{\omg}{{\omega_1}}
\newcommand{\setm}{\setminus}
\newcommand{\subs}{\subset}
\newcommand{\dom}{\text{dom}}
\def\<{\left\langle}
\def\>{\right\rangle}
\def\br#1;#2;{\bigl[ {#1} \bigr]^ {#2} }
\newcommand{\oo}{{\omega}}
\begin{document}

\begin{frontmatter}

\title{Trees, ladders and graphs}
\author{D\'aniel T. Soukup}
\ead{daniel.soukup@mail.utoronto.ca}
\ead[url]{http://www.math.toronto.edu/$\sim$dsoukup}

\address{Department of Mathematics, 
University of Toronto, 
Bahen Centre 
40 St. George St., Room 6290
Toronto, Ontario 
CANADA 
M5S 2E4}

\date{\today}

\begin{abstract}
We introduce a new method to construct uncountably chromatic graphs from non special trees and ladder systems. Answering a question of P. Erd\H os and A. Hajnal from 1985, we construct graphs of chromatic number $\omg$ without uncountable $\omega$-connected subgraphs. Second, we build triangle free graphs of chromatic number $\omg$ without subgraphs isomorphic to $H_{\oo,\oo+2}$. 
\end{abstract}

\begin{keyword}uncountably chromatic\sep connected subgraph\sep ladder system\sep non special tree

\MSC[2010]{05C63\sep 05C15\sep 03E05}
\end{keyword}

\end{frontmatter}

\section{Introduction}

The \emph{chromatic number} of a graph $G$, denoted by $Chr(G)$, is the least (cardinal) number $\kappa$ such that the vertices of $G$ can be covered by $\kappa$ many independent sets. 
A fundamental problem of graph theory asks how large chromatic number affects structural properties of a graph and in particular, is it true that a graph with large chromatic number has certain obligatory subgraphs? 

The first result in this area is due to W. Tutte, alias Blanche Descartes \cite{tutte} (and independently A. Zykov \cite{zyk}): a construction of triangle free graphs of arbitrary large finite chromatic number. This result was significantly extended by Paul Erd\H os: there are graphs of arbitrary large finite chromatic number with arbitrary large girth (i.e. no small cycles) \cite{prob}.  Tutte's result extends to arbitrary \emph{infinite} chromatic graphs \cite{tfree} but can we generalize the second result?

In 1966 in their seminal paper, P. Erd\H os and Andr\' as Hajnal proved, to the great surprise of many, that every uncountably chromatic graph $G$ contains a copy of the complete bipartite graph $K_{n,\omega_1}$ for each $n\in \omega$  \cite{EH0} and hence a copy of a four cycle which is in striking contrast with the finite case and Tutte's construction. We are interested in the corollary that every uncountably chromatic graph $G$ contains an $n$-connected subgraph for each finite $n$. Erd\H os and Hajnal conjectured that this result can be extended to $\omega$-connectivity and asked in \cite{EH0} if every graph with chromatic number and size $\omg$ contains a subgraph of chromatic number $\omg$ which is $\omega$-connected i.e. any two points are connected by infinitely many pairwise disjoint paths. In 1985, Erd\H os and  Hajnal asked if every graph of chromatic number $\omg$ contains an uncountably chromatic $\omega$-connected subgraph \cite{EHsurv}; these problems were highly popularized and are included in the recent surveys \cite{kopesurv} and \cite{kopeerdos} as well. The main purpose of this paper is to solve the problem from \cite{EHsurv} using a new and rather flexible construction. 

Most advances on these questions are due to P\' eter Komj\'ath: in \cite{kopeconn}, he proves that every uncountably chromatic graph contains uncountably chromatic $n$-connected subgraphs (with minimal degree $\omega$) for each $n\in \omega$. Regarding the original Erd\H os-Hajnal question, he shows in \cite{kopecon1} that under PFA (the Proper Forcing Axiom) every graph of size and chromatic number $\omg$ contains an uncountably chromatic $\omega$-connected subgraph. 
 Furthermore, in the same paper, he presents a forcing which introduces a graph  of size and chromatic number $\omg$ \emph{without} uncountably chromatic $\omega$-connected subgraphs. His proof was slightly flawed (see the introduction of \cite{kopecon2} for details), but the error is corrected in the recent \cite{kopecon2} where he forces an example of a graph of size and chromatic number $\omg$ without \emph{uncountable} $\omega$-connected subgraphs. In particular, the original question from \cite{EH0} is independent of ZFC and the answer to the question from \cite{EHsurv} is consistently no.

Our main result is a complete answer to the above problem from \cite{EHsurv} in Theorem \ref{mainthm}: we construct a graph of chromatic number $\omg$ and size continuum which contains no uncountable $\omega$-connected subgraphs. Our proof is entirely within the usual axioms of set theory, i.e. we use no extra set theoretical assumptions nor forcing arguments. In Section \ref{consec2}, we refine our methods in order to get an uncountably chromatic graph $G$ such that any uncountable set $A$ contains two points which are connected by only finitely many pairwise disjoint paths \emph{even in $G$}.

In the second part of the paper, we present a different example using the same framework. In light of the fact that finite bipartite graphs embed into every uncountably chromatic graph, it is reasonable to ask if $K_{\omega,\omega}$ embeds into every uncountably chromatic graph. This is \emph{not} the case, however we have the following results by Hajnal and Komj\'ath \cite{half}: the graph $H_{\oo,\oo+1}$ embeds into every uncountably chromatic graph, however under CH (the Continuum Hypothesis) there is an uncountably chromatic graph without subgraphs isomorphic to $H_{\oo,\oo+2}$. Recall that $H_{\oo,\oo+2}$ is the graph on vertices $\{x_i,y_i,z,z':i\in \mathbb N\}$ with edges $$\bigl \{\{x_i,y_j\},\{x_i,z\},\{x_i,z'\}:i\leq j\in \mathbb N\bigr \}$$ and $H_{\oo,\oo+1}$ is the subgraph spanned by $\{x_i,y_i,z:i\in\mathbb N\}$.

In Section \ref{tfreesec}, we construct a graph of chromatic number $\omg$ and size continuum which is triangle free and has no subgraphs isomorphic to $H_{\oo,\oo+2}$. Thus the assumption of CH can be eliminated from the above result of Hajnal and Komj\'ath.  

Finally, in Section \ref{remarkssec} we close with some remarks on our constructions.

Our belief is that this framework for constructing uncountably chromatic graphs will find  diverse applications in infinite combinatorics.


\section{Preliminaries}

A set theoretic \emph{tree} $(T,\leq)$ is a partially ordered set such that $$t^\downarrow=\{s\in T:s< t\}$$ is well ordered for all $t\in T$. Note that this notion of a tree has little to do with  \emph{graph theoretic trees} i.e. connected graphs without circles; in this paper, by a tree we will always mean a set theoretic tree. Every tree admits a \emph{height} function: $ht(t)$ denotes the order type of $\da t$ for $t\in T$. The \emph{height of the tree T} is $\sup\{ht(t):t\in T\}$. 

 \begin{definition}
 Let $G(T)$ denote the \emph{comparability graph} of a tree $(T,\leq)$ i.e. the set of vertices of $G(T)$ is $T$ and $\{x,y\}\in [T]^2$ is an edge iff $x\leq y$ or $y\leq x$.
\end{definition}

Note that $A\subseteq T$ is independent in $G(T)$ iff it is an antichain in $T$ thus $Chr(G(T))\leq \omega$ iff $T$ is a special tree (i.e. the union of countably many antichains). We will be interested in constructing subgraphs of a graph $G(T)$ in order to solve the problems proposed in the introduction. 

We investigate connectivity properties of infinite graphs:

\begin{definition}We say that a set of vertices $F$ in a graph \emph{separates} two vertices $s$ and $t$ iff every path from $s$ to $t$ passes through $F$. We say that $F$ separates a set of vertices $A$ iff there are distinct $s,t\in A$ such that $F$ separates $s$ and $t$. \\
A graph $G$ is \emph{$\oo$-connected} iff no finite set separates two points of $G$. 
\end{definition}

Note that a subgraph $A$ of a graph $G$ is $\oo$-connected iff any two points $s,t\in A$ are connected by infinitely many pairwise disjoint paths in $A$.

\begin{obs}
Suppose that the tree $T$ does not contain uncountable chains. Then every uncountable subset of $G(T)$ can be separated by a countable set. 
\end{obs}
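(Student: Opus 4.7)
The plan is to exploit the fact that $A$, being uncountable, cannot be a chain, so it must contain a pair $s,t$ of incomparable elements; the separator will simply be $F := \da s \cup \da t$. Since $T$ has no uncountable chain, every chain of $T$ — in particular $\da s$ and $\da t$ — is countable, so $F$ is countable; and because $s,t$ are incomparable we have $s,t \notin F$.

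The content is to verify that $F$ separates $s$ from $t$ in $G(T)$. I would suppose toward contradiction that there is a simple path $s = x_0, x_1, \ldots, x_n = t$ in $G(T) \setminus F$; since $s,t$ are incomparable, $n \ge 2$. The plan is to prove by induction on $i$ that $x_i > s$ for every $1 \le i \le n-1$. For the base case, $x_1$ is comparable to $s$ but $x_1 \ne s$ and $x_1 \notin \da s$, which forces $x_1 > s$. For the inductive step, $x_i$ is comparable to $x_{i-1}$: if $x_i > x_{i-1}$ we are done by transitivity, while if $x_i < x_{i-1}$ then $x_i$ and $s$ both lie in the well-ordered set $\da{x_{i-1}}$ and so are comparable to each other; the only possibility compatible with $x_i \ne s$ and $x_i \notin \da s$ is again $x_i > s$.

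Applying the conclusion at $i = n-1$ gives $x_{n-1} > s$. Running the same dichotomy on the final edge $\{x_{n-1},t\}$ yields either $t > x_{n-1} > s$ or $t \in \da{x_{n-1}}$, in which case $s,t \in \da{x_{n-1}}$ and are therefore comparable. Either outcome contradicts the choice of $s,t$ as incomparable, so no such path exists and $F$ separates $s$ from $t$.

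The whole argument rests on the single tree-theoretic fact that $\da u$ is linearly ordered for every $u$, and uses no set-theoretic hypothesis beyond ZFC. I do not foresee a real obstacle; the only mild care is to work with simple paths so that the inductive step cannot be derailed by a vertex repetition (in particular $x_i = s$ or $x_i = x_{i-1}$).
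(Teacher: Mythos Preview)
Your proof is correct and follows essentially the same idea as the paper's: pick incomparable $s,t\in A$ and separate them by (a set containing) $\da s$, using that the connected component of $s$ in $G(T)\setminus \da s$ lies inside $\{r:r\ge s\}$. Note that your inclusion of $\da t$ in $F$ is harmless but unnecessary --- your induction never uses $x_i\notin \da t$, and the paper indeed takes simply $F=\da s$.
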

\begin{proof}
 Take any uncountable $A\subseteq T$ and find two incomparable elements $s,t\in A$. We claim that the countable set $\da s$ separates $s$ and $t$. Indeed, it is straightforward to see that the connected component of $s$ in $T\setm s^\downarrow$ contains only elements $r$ with $r\geq s$. 
\end{proof}

\begin{corollary}\label{treegraph}
 If $T$ is a non special tree with no uncountable chains then the graph $G=G(T)$ satisfies $Chr(G)>\oo$ while every uncountable set of vertices can be separated by a countable set.
\end{corollary}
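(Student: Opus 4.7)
The plan is essentially to glue together the two facts established just before the corollary. First, recall that a set $A\subseteq T$ is independent in $G(T)$ exactly when it is an antichain in $T$. Hence a countable proper coloring of $G(T)$ is nothing but a partition of $T$ into countably many antichains, i.e.\ a witness that $T$ is special. Since $T$ is assumed to be non special, no such coloring exists, and therefore $Chr(G)>\oo$.

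For the second clause I would directly invoke the observation preceding the corollary. Let $A\subseteq T$ be uncountable. Since $T$ has no uncountable chain, $A$ itself is not a chain, so we can pick two incomparable elements $s,t\in A$. The set $\da s=\{r\in T:r<s\}$ is well ordered, and being a chain it is countable by assumption. By the observation, $\da s$ separates $s$ and $t$ in $G(T)$: any vertex $r$ adjacent to $s$ in $G(T)\setm \da s$ satisfies $r\geq s$, so the connected component of $s$ in $G(T)\setm \da s$ consists only of vertices comparable to $s$ and lying above it, in particular it does not contain $t$.

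There is really no obstacle here beyond matching the hypotheses: the words ``non special'' convert literally into ``$Chr(G)>\oo$'' via the antichain/independent-set correspondence, while ``no uncountable chain'' supplies both the existence of incomparable pairs inside any uncountable $A$ and the countability of the separating set $\da s$. Combining the two observations yields the corollary.
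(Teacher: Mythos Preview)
Your proof is correct and follows exactly the paper's approach: the paper states this corollary without proof, relying on the remark that $Chr(G(T))\leq\omega$ iff $T$ is special (the independent set/antichain correspondence you spell out) together with the preceding Observation for the separation clause. There is nothing to add.
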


The above simple corollary indicates that it is reasonable to investigate non special trees $T$ without uncountable chains and the corresponding graphs $G(T)$ regarding the Erd\H os-Hajnal question.

Classical examples of non special trees with no uncountable chains are 
\begin{enumerate}
\item $\sigma \mathbb Q=\{t\subs \mathbb Q:t$ is well ordered$\}$ with $s\leq t$ iff $s$ is an initial segment of $t$,
\item $T(S)=\{t\subs S:t$ is closed$\}$ with $s\leq t$ iff $s$ is an initial segment of $t$  where $S\subseteq \omg$ is stationary, co-stationary.
\end{enumerate}

The above trees both have size continuum and height $\omg$ and hence the corresponding compatibility graphs have chromatic number exactly $\omg$. $\sigma \mathbb Q$ was first studied by D. Kurepa in connection with Souslin's problem while the importance of trees of the form $T(S)$ was realized by S. Todorcevic \cite{stevocont}. We will use trees of the second kind i.e. of the form $T(S)$ throughout this paper. 

Trees of the form $T(S)$ have the following nice property: $T(S)$ has \emph{no branching at limit levels} i.e. $\da t = \da s$ implies $t=s$ for all limit elements $t,s\in T(S)$. This is proved using that every element of $T(S)$ is a \emph{closed} subset of $\omg$.

We will use the following notation: if $T=T(S)$ for some $S\subseteq \omg$ then let $$T_\delta=\{t\in T:\max (t)=\delta\}$$ for $\delta \in \omg$ and similarly define $T_{<\delta},T_{\leq \delta}$. 

In general, we use standard set theoretical notation and we refer the reader to \cite{kunen} for basic facts in set theory.

\section{Connectivity and chromatic number}\label{consec}

Our aim is to construct a subgraph $X$ of the comparability graph of $T(S)$ (where $S\subseteq \omg$ is stationary, costationary) which has chromatic number $\omg$ and does not contain uncountable \emph{$\omega$-connected} subsets i.e. every uncountable set of vertices $A$ contains two points $s,t\in A$ and a finite set $F\subs A$ such that any path $P\subs A$ between $s$ and $t$ passes through $F$.

\begin{definition} Suppose that $T$ is a tree. A \emph{ladder system on $T$} is a family  $\ul C=\{C_t:t\in T\}$ so that $C_t\subs t^\downarrow$ is either finite or a cofinal sequence of type $\omega$.
\end{definition}

Each ladder system $\ul C$ defines a subgraph $X_{\ul{C}}$ of $G(T)$ with vertices $T$ and edges $$\{\{s,t\}:s\in C_t,t\in T\}.$$ This is in direct analogy with the Hajnal-M\'at\'e graphs introduced in \cite{HM} i.e. the case where the tree $T$ is simply the cardinal $\omg$. We remark that it is independent of $ZFC$ whether there is a ladder system $\ul C$ on $\omg$ so that the corresponding Hajnal-M\'at\'e graph is uncountably chromatic. Also, ladder systems on trees were investigated in \cite{treeladder} in a different context.

\begin{definition} A ladder system $\ul C$ on $T$ is \emph{transitive} iff $$C_t\cap \da s\subseteq C_s$$ for all $t\in T$ and $s\in C_t$.
\end{definition}

Note that $\ul{C}$ is transitive iff $C_t$ spans a complete graph in $X_{\ul{C}}$ for all $t\in T$. The next two lemmas explain why we introduced the notion of a transitive ladder system. 

\begin{lemma}\label{mon} Suppose that $T$ is a tree and $\ul C$ is a  transitive ladder system on $T$. If $s,t\in T$  and $P$ is a finite path in $X_{\ul C}$ from $s$ to $t$ then there is a path $Q\subseteq P$ which is the union of two monotone paths.
\end{lemma}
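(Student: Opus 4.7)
The plan is to simplify $P$ step by step using the transitivity of $\ul C$. Note first that every edge of $X_{\ul C}$ joins two $\leq$-comparable vertices of $T$, since $C_t \subseteq \da t$. Writing $P = (v_0,v_1,\ldots,v_n)$ with $v_0 = s$ and $v_n = t$, each consecutive pair is therefore comparable, and I can label the $i$-th step ``up'' if $v_{i-1} < v_i$ and ``down'' otherwise. Call an interior vertex $v_i$ a \emph{peak} if the $i$-th step is up and the $(i{+}1)$-th step is down. A path whose sign sequence contains no peak must have the pattern $-^a\,+^b$ (every descent precedes every ascent), and is therefore the concatenation of a descending monotone path and an ascending monotone path meeting at their common lowest vertex. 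Hence it suffices to produce a path $Q$ in $X_{\ul C}$ with $V(Q)\subseteq V(P)$ that goes from $s$ to $t$ and has no interior peak.

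The key move is \emph{peak elimination}. Suppose $v_{i-1} < v_i > v_{i+1}$. Then $v_{i-1},v_{i+1}\in C_{v_i}\subseteq \da{v_i}$; since $\da{v_i}$ is well ordered they are comparable, and I may assume $v_{i-1} < v_{i+1}$ (the other case is symmetric). Applying transitivity to $t:=v_i$ with $s:=v_{i+1}\in C_{v_i}$ gives $C_{v_i}\cap \da{v_{i+1}}\subseteq C_{v_{i+1}}$, and since $v_{i-1}\in C_{v_i}\cap \da{v_{i+1}}$ I conclude $v_{i-1}\in C_{v_{i+1}}$. Thus $\{v_{i-1},v_{i+1}\}$ is an edge of $X_{\ul C}$, and I may delete $v_i$ to obtain a strictly shorter path from $s$ to $t$ whose vertices still lie in $V(P)$.

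Iterating peak elimination must terminate because the length decreases at each step, so finitely many applications deliver the desired peak-free subpath $Q$. I expect the argument to be conceptual rather than technical: the crucial point is that the transitivity hypothesis is precisely the combinatorial content needed to short-circuit a peak $v_{i-1}<v_i>v_{i+1}$ via the chord $\{v_{i-1},v_{i+1}\}$, while there is no analogous device for a valley $v_{i-1}>v_i<v_{i+1}$ (where $v_{i-1}$ and $v_{i+1}$ may well be incomparable in $T$). This asymmetry is consistent with the conclusion: $Q$ may still contain a single valley, but nothing more.
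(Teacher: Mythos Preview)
Your proof is correct and is essentially the paper's argument: the paper simply takes a minimal-length subpath $Q\subseteq P$ from $s$ to $t$ and observes that a peak $q_{i-1}<q_i>q_{i+1}$ would, by transitivity, yield the edge $\{q_{i-1},q_{i+1}\}$, contradicting minimality. Your iterative peak-elimination is the constructive form of the same idea, terminating precisely at such a minimal subpath.
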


A \emph{monotone path} in $X_{\ul C}$ is a path which is a chain in the tree ordering.

\begin{proof} Let $Q\subseteq P$ be a path of \emph{minimal length} from $s$ to $t$. Let $\{q_i:i<n\}$ enumerate $Q$ by its path ordering. Note that we cannot have $q_{i-1},q_{i+1}<q_i$ for any $1\leq i\leq n-1$; indeed, this would imply that $q_{i-1},q_{i+1}\in C_{q_i}$ and hence, by transitivity, $q_{i-1}$ and $q_{i+1}$ are connected by an edge which contradicts the minimality of $Q$. This means that if $q_{i-1}<q_i$ then $q_i< q_{i+1}$ for any $1\leq i\leq n-1$ i.e. $Q$ is monotone increasing from the first step up (in the tree ordering). In other words, $Q$ is the union of a monotone decreasing and a monotone increasing path.

\end{proof}

\begin{lemma}\label{con} Suppose that $T$ is a tree and  $\ul C$ is a  transitive ladder system on $T$. If $T$ has no branching at limit levels and contains no uncountable chains then $X_{\ul C}$ contains no uncountable $\omega$-connected subsets.
\end{lemma}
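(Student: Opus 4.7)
The plan is to show that any uncountable $A\subseteq T$ has two vertices separated by a finite subset of $A$ in the induced subgraph. By Lemma~\ref{mon}, if $s,t\in A$ are incomparable and $P\subseteq A$ is a path from $s$ to $t$, then a minimal length subpath $Q\subseteq P$ is V-shaped; in particular the second vertex of $Q$ lies in $A\cap C_s$. Hence $A\cap C_s$ is a cut between $s$ and $t$ in the induced subgraph on $A$, and it suffices to find an incomparable pair $s,t\in A$ with $A\cap C_s$ finite.

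First I would build an increasing chain $\langle s_\eta:\eta<\xi\rangle$ in $A$ by transfinite recursion: at stage $\eta$ let $A_\eta=A\setminus\{s_\nu:\nu<\eta\}$ and choose $s_\eta$ to be a $T$-minimal element of $A_\eta$ (which exists by well-foundedness). Continue as long as every element of $A_\eta$ is $\geq s_\eta$, and stop as soon as some $t\in A_\eta$ is incomparable to $s_\eta$. A routine induction shows that the $s_\eta$'s form a strictly increasing chain with $A\cap \da{s_\eta}=\{s_\nu:\nu<\eta\}$, so by the no-uncountable-chain hypothesis the iteration terminates at some $\xi<\omg$.

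Let $s:=s_\xi$, fix $t\in A_\xi$ incomparable to $s$, and put $\mu=\sup_{\nu<\xi}ht(s_\nu)$. The crucial claim is $ht(s_\xi)>\mu$. For successor $\xi$ this is immediate since $s_\xi$ is strictly above the previous element. For limit $\xi$, the no-branching-at-limit-levels hypothesis says the chain $\{s_\nu:\nu<\xi\}$ admits at most one ``limit node'' $s^*\in T$ at height $\mu$ (the unique element, if any, whose downset equals the downward closure of the chain), and every element of $A_\xi$ lies above the chain and hence is $\geq s^*$; if $s^*\in A$ then $s^*$ would be the minimum of $A_\xi$ with all of $A_\xi$ above it, contradicting the stopping condition at $\xi$, so $s^*\notin A$ and $s_\xi>s^*$. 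Thus $ht(s_\xi)>\mu$ in every case. Now $A\cap C_{s_\xi}\subseteq\{s_\nu:\nu<\xi\}$ whose elements all have height $\leq\mu$, and since $C_{s_\xi}$ is either finite or a cofinal $\omega$-sequence in $\da{s_\xi}$, only finitely many of its members can lie at heights $\leq\mu$. Hence $F:=A\cap C_{s_\xi}$ is a finite subset of $A$ separating $s_\xi$ from $t$ in the induced subgraph. The main obstacle is the possibility of a transfinite stopping stage, which the no-limit-branching hypothesis tames by forcing the cofinal ladder $C_{s_\xi}$ to eventually escape the previously chosen chain.
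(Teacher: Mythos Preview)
Your argument is correct, but it takes a longer route than the paper's. The paper simply picks any incomparable pair $s,t\in A$, lets $r$ be their meet in $T$ (which exists as a genuine element because $T$ does not branch at limit levels), and then takes $s'$ to be any element of $A$ minimal among those strictly between $r$ and $s$. This $s'$ satisfies $A\cap\da{s'}\subseteq \da r\cup\{r\}$, so $A\cap C_{s'}$ is contained in $C_{s'}\cap(\da r\cup\{r\})$, which is finite because $r<s'$ and $C_{s'}$ is cofinal in $\da{s'}$; the separating argument via Lemma~\ref{mon} is then identical to yours.

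So both proofs isolate an element of $A$ with a ``height gap'' in $A$ just below it and an incomparable partner, and both use Lemma~\ref{mon} to conclude. The difference is how that element is found: the paper locates it in one step by descending from an incomparable pair to their meet, while you build up a transfinite chain of successive minima from below and stop when comparability first fails. Your approach costs you the limit--stage analysis (existence and non-membership of $s^*$, which you handle correctly but somewhat tersely---note that $s^*$ must exist because $A_\xi$ is nonempty and every element of $A_\xi$ lies above the chain). The paper's approach trades that for the single observation that no branching at limit levels guarantees a maximal common predecessor. The paper's route is shorter; yours has the minor advantage that the separating set $F=A\cap C_{s_\xi}$ is visibly a subset of $A$ from the outset.
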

\begin{proof} Fix an uncountable $A\subseteq T$ and find two incomparable elements $s,t\in A$. Let $r$ denote the maximal common initial part of $s$ and $t$; this exists and $r<s,t$ as $T$ does not branch at limits. Find $s'\in A$ so that $r< s'\leq s$ and $$s'^\downarrow \cap A\subseteq r^\downarrow \cup \{r\}.$$ We claim that $s'$ and $t$ are separated by the finite set $F=\{r\}\cup (r^\downarrow\cap C_{s'})$.

Suppose that $P=\{p_i:i<n\}\subs A$ is a finite path from $p_0=s'$ to $t$. By Lemma \ref{mon}, we can suppose that $P$ is the union of two monotone paths. Note that $p_1\in A\cap C_{s'}$ as $p_1<p_0=s'$ and hence $p_1 \in r^\downarrow \cup \{r\}$ by the choice of $s'$. In particular, $p_1\in F$; this shows that $F$ separates $s'$ and $t$.
\end{proof}

We are ready now to prove our main result:

\begin{theorem}\label{mainthm} Fix a stationary, costationary $S\subs \omg$ and let $T=T(S)$. Then there is a subgraph $X$ of $G(T)$ such that $Chr(X)=\omg$ and $X$ contains no uncountable $\omega$-connected subsets.
\end{theorem}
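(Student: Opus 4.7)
The plan is to apply Lemma \ref{con}: build a transitive ladder system $\underline{C}=\{C_t:t\in T\}$ on $T=T(S)$ such that the induced subgraph $X=X_{\underline C}$ of $G(T)$ has chromatic number $\omg$. Since $T(S)$ has no branching at limit levels and no uncountable chains (any such chain would yield a club in $\omg$ contained in $S$, contradicting costationarity), Lemma \ref{con} then guarantees that $X$ contains no uncountable $\oo$-connected subsets; the cardinality bound $|X|\le 2^\oo$ is automatic.

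To produce $\underline{C}$, I would first fix a coherent family $(e_\alpha:\alpha<\omg,\ \alpha\text{ a limit})$ of strictly increasing cofinal maps $e_\alpha:\oo\to\alpha$, with every $e_\alpha(n)$ a limit ordinal and satisfying $\{e_\alpha(k):k<n\}\subseteq \ran(e_{e_\alpha(n)})$ for all $n<\oo$. Such a coherent $C$-sequence is standard to construct by transfinite recursion on $\alpha$. Then for $t\in T$ of limit height $\alpha>\oo$, set $C_t=\{t\uhp e_\alpha(n):n<\oo\}$, where $t\uhp\beta$ is the unique initial segment of $t$ of height $\beta$; for $t$ of height $\oo$ let $C_t=\da t$ (itself an $\oo$-sequence); and for $t$ of finite or successor height let $C_t$ be the finite tail of $\da t$ above the largest limit node strictly below $t$. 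A routine check shows that transitivity $C_t\cap \da s\subseteq C_s$ follows from the coherence of the $e_\alpha$.

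The main obstacle is verifying that $\chr(X_{\underline C})=\omg$, i.e.\ that the ladder is rich enough that no $\oo$-coloring of $T$ can render all of its color classes independent in $X_{\underline C}$. Suppose toward a contradiction that $c:T\to\oo$ is such a coloring, so $c(t)\notin\{c(s):s\in C_t\}$ for every limit node $t$. I would attack this with a Fodor-style argument on $S$: to each limit node $t\in T_\delta$ attach a regressive invariant on $\delta$ (for instance a coding of the eventual pattern of $(c(s_n^t))_{n<\oo}$ together with $c(t)$), thin to a stationary set on which this invariant is constant, and use the $2^\oo$-sized fibres $T_\delta$ to extract $\delta<\delta'$ in $S$ and nodes $t\in T_\delta,\ t'\in T_{\delta'}$ with $t\in C_{t'}$ and $c(t)=c(t')$, contradicting properness. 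Making this work will almost certainly force a refinement of the construction at each $\delta\in S$: the ladders on $T_\delta$ must jointly defeat every potential $\oo$-coloring of $T_{<\delta}$, exploiting that $|T_\delta|=2^\oo$ dominates the number of relevant coloring patterns. Preserving transitivity through this enrichment is the delicate heart of the proof.
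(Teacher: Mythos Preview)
Your overall strategy is right and matches the paper: build a transitive ladder system on $T=T(S)$ and invoke Lemma~\ref{con}. But the concrete construction you propose---lifting a coherent sequence $(e_\alpha)$ on $\omg$ to $T$ via $C_t=\{t\uhp e_\alpha(n):n<\oo\}$---has a genuine gap that cannot be repaired by a Fodor argument. The problem is that your ladders depend only on $ht(t)$, so any proper colouring $g:\omg\to\oo$ of the Hajnal--M\'at\'e graph on $\omg$ with ladder system $\{e_\alpha[\oo]:\alpha\in\text{Lim}(\omg)\}$ lifts to a proper colouring $c(t)=g(ht(t))$ of $X_{\ul C}$. As the paper itself remarks, it is consistent with ZFC that every Hajnal--M\'at\'e graph on $\omg$ is countably chromatic, so your $X_{\ul C}$ is not provably uncountably chromatic. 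The Fodor-style argument you sketch cannot rescue this: the colouring lives on $T$, not on $\omg$, and there is no canonical way to attach to $\delta\in S$ a single node of $T_\delta$ on which to press down.

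You correctly diagnose the fix in your last paragraph, and that is exactly what the paper does. At each $\delta\in S$, enumerate as $\{(A_\xi,f_\xi):\xi<\mf c\}$ all pairs with $A_\xi\in[T_{<\delta}]^\oo$ satisfying a branching condition and $f_\xi:A_\xi\to\oo$. For each $\xi$ build, via an order-preserving embedding $\psi:2^{<\oo}\to A_\xi$ together with a partial companion $\varphi$, a perfect set of candidates in $T_\delta$, pick a fresh $t_\xi=\psi(x_\xi)$, and set $C_{t_\xi}=\{\varphi(x_\xi\uhp k):k<\oo\}$. Transitivity is preserved by requiring at every stage that $\{\varphi(x\uhp k):k\le|x|\}$ already span a clique in $X_{\ul C}\uhp T_{<\delta}$; the point is that this requirement is an \emph{open} condition on the next choice, so one can always meet it while also matching colour $n$ at level $n$ of the binary tree. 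The chromatic-number verification is then a clean elementary-submodel reflection: given $f:T\to\oo$, take $M\prec H(\omega_2)$ with $\delta=M\cap\omg\in S$, locate $\xi$ with $(A_\xi,f_\xi)=(T\cap M,\,f\uhp M)$, and observe that $t_\xi$ itself witnesses that the relevant set $R_{x_\xi\uhp n}$ (for $n=f(t_\xi)$) is nonempty, so $\varphi(x_\xi\uhp n)\in C_{t_\xi}$ was chosen with colour $n$. No pressing-down is needed.
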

\begin{proof} It suffices to show that there is a transitive ladder system $\ul C$ on $T$ such that $Chr(X_{\ul C})=\omg$; indeed, $T$ has no uncountable chains nor branches at limit levels hence Lemma \ref{con} implies that $X_{\ul C}$ has no uncountable $\omega$-connected subsets. Furthermore, as the tree $T$ has height $\omg$ we have that $Chr(X_{\ul C})\leq Chr(G(T))\leq \omg$ for any $\ul C$. Thus we will have to show that $Chr(X_{\ul C})>\omega$ in the end.

By induction on $\delta\in S'$ (where $S'$ denotes the accumulation points of $S$), we define the transitive ladder system $\ul C$ on $T_{<\delta}$ and hence the corresponding part of $X_{\ul C}$ on $T_{<\delta}$.

Fix $\delta\in S'$ and suppose that we already defined a transitive ladder system $(C_t:t\in T_{<\delta})$ on $T_{<\delta}$. We extend this to $T_{<\delta^+}$ while preserving transitivity where $\delta^+$ is the minimum of $S'\setm (\delta+1)$. Note that transitivity is necessarily preserved at limit steps. If $\delta\notin S$ then we let $C_t=\emptyset$ for $t\in T_{<\delta^+}\setm T_{<\delta}$. 

Suppose that $\delta\in S$ and let us define $C_t$ for $t\in T_{\delta}$ first. Let $\{(A_\xi,f_\xi):\xi<\mf c\}$ denote a 1-1 enumeration of all the pairs $(A,f)$ so that $A\in [T_{<\delta}]^{\omega}$, $f:A\to \omega$ and $A$ satisfies

\begin{enumerate}[$(\star)$]
\item for every $t\in A$ and $\vareps<\delta$ there are incomparable $s^0,s^1\in A$ so that $s^i\geq t$ and  $\max(s^i)> \vareps$ for $i<2$.
\end{enumerate}

By induction on $\xi<\mf c$ we will find $t_\xi\in T_\delta\setm \{t_\zeta:\zeta<\xi\}$ and sets $C_{t_\xi}\subseteq \da t_\xi$ so that $$\bigl (C_t:t\in T_{<\delta}\cup \{t_\xi:\xi<\mf c\}\bigr )$$ is still a transitive ladder system. We will let $C_t=\emptyset$ for $t\in T_{\delta}\setm \{t_\xi:\xi<\mf c\}$. 

Fix a cofinal $\omega$-type sequence $\{\delta_n:n\in\omega\}$ in $\delta$. Suppose we defined $t_\zeta\in T_\delta$ and $C_{t_\zeta}\subseteq \da t_\zeta$ for $\zeta<\xi$.

Define a map $\psi:2^{<\omega}\to A_\xi$ and a partial map $\varphi:2^{<\omega}\to A_\xi$ so that 

\begin{enumerate}[(i)]

\item $\psi$ and $\varphi$ are order preserving injections and $$\psi(x)\leq \varphi(x)\leq\psi(x\smf i)$$ for $i<2$ provided that $x\in \dom(\varphi)$,
\item \label{transcond} $\{\varphi(x\uhp k):k\leq |x|, x\uhp k\in \dom(\varphi)\}$ is a complete graph in $T_{<\delta}$,
\item \label{branchcond} $\psi(x\smf 0)$ and $\psi(x\smf 1)$ are incomparable and contained in $A_\xi\setm T_{<\delta_n}$,
\item \label{cond} \underline{if} there is $t\in A_\xi$ such that $t\geq \psi(x)$, $f_\xi(t)=n$ and $\{\varphi(x\uhp k):k< |x|, x\uhp k\in \dom(\varphi)\}\cup\{t\}$ is complete \underline{then} $x\in \dom(\varphi)$ and $f_\xi(\varphi(x))=n$ as well 
\end{enumerate}
 for all $n\in \omega$ and $x\in 2^n$.

We define $\psi(x)$ and $\varphi(x)$ for $x\in 2^n$ by induction on $n\in \omega$. We set $\psi(\emptyset)\in A_\xi$ arbitrarily. If $\psi(x)$ is defined for some $x\in 2^n$ then check if $$R_x=\{t\in A_\xi:t\geq \psi(x), f_\xi(t)=n \text{ and }\{\varphi(x\uhp k):k< |x|, x\uhp k\in \dom(\varphi)\}\cup\{t\} \text{  is complete}\}$$
 is empty or not. If $R_x\neq \emptyset$ then we put $x$ into $\dom(\varphi)$ and pick $\varphi(x)$ from $R_x$ arbitrarily; otherwise $x\notin \dom(\varphi)$.
 Now find incomparable $\psi(x\smf 0),\psi(x\smf 1)\in A_\xi$ above $\psi(x)$ so that $\max(\psi(x\smf i))\geq \delta_n$ and $\psi(x\smf i)\geq \varphi(x)$ if $x\in \dom(\varphi)$; this can be done as $A_\xi$ satisfies (1) above. This finishes the construction of $\psi$ and $\varphi$.

\vspace{0.5 cm}

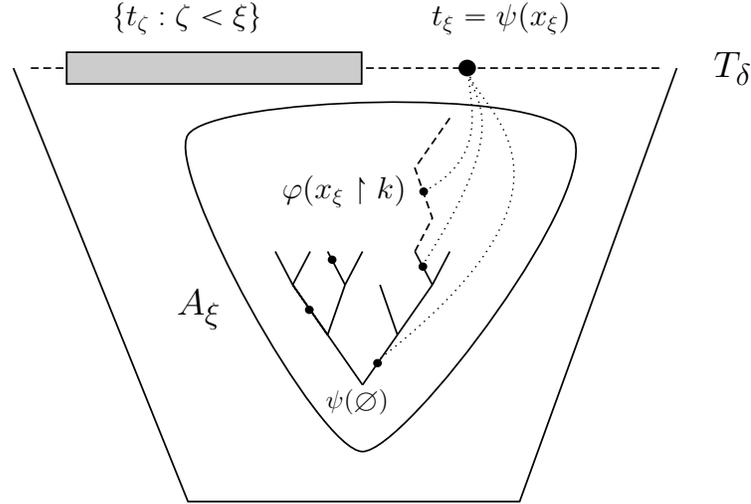
\begin{figure}[H]
\centering

\resizebox{9 cm}{6 cm}{
\begin{pspicture}(0,-5.410011)(15.237574,5.410011)

\definecolor{colour0}{rgb}{0.8,0.8,0.8}
\psline[linecolor=black, linewidth=0.04](0.01867096,5.010011)(4.018671,-5.389989)(11.618671,-5.389989)(15.218671,5.010011)(15.218671,5.010011)

\psline[linecolor=black, linewidth=0.04, linestyle=dashed, dash=0.17638889cm 0.10583334cm](0.41867095,5.010011)(14.818671,5.010011)(14.818671,5.010011)
\psframe[linecolor=black, linewidth=0.04, fillstyle=solid,fillcolor=colour0, dimen=outer](8.018671,5.410011)(1.218671,4.610011)
\psbezier[linecolor=black, linewidth=0.04](4.018671,3.410011)(4.573371,4.288286)(12.018671,4.610011)(12.818671,3.410011)(13.618671,2.210011)(9.018671,-4.189989)(8.018671,-4.189989)(7.018671,-4.189989)(3.4639707,2.5317357)(4.018671,3.410011)
\psline[linecolor=black, linewidth=0.04](8.018671,-2.589989)(7.218671,-1.389989)(7.218671,-1.389989)
\psline[linecolor=black, linewidth=0.04](8.018671,-2.589989)(8.818671,-1.389989)
\psline[linecolor=black, linewidth=0.04](7.218671,-1.389989)(6.418671,-0.18998902)(7.218671,-1.389989)
\psline[linecolor=black, linewidth=0.04](7.218671,-1.389989)(7.618671,-0.18998902)
\psline[linecolor=black, linewidth=0.04](8.818671,-1.389989)(8.418671,-0.18998902)
\psline[linecolor=black, linewidth=0.04](8.818671,-1.389989)(9.618671,-0.18998902)
\psline[linecolor=black, linewidth=0.04](6.418671,-0.18998902)(6.018671,0.610011)(6.018671,0.610011)
\psline[linecolor=black, linewidth=0.04](6.418671,-0.18998902)(6.8186707,0.610011)
\psline[linecolor=black, linewidth=0.04](7.618671,-0.18998902)(7.218671,0.610011)(7.218671,0.610011)
\psline[linecolor=black, linewidth=0.04](7.618671,-0.18998902)(8.018671,0.610011)
\psline[linecolor=black, linewidth=0.04](9.618671,-0.18998902)(9.218671,0.610011)(9.218671,0.610011)
\psline[linecolor=black, linewidth=0.04](9.618671,-0.18998902)(10.018671,0.610011)
\psline[linecolor=black, linewidth=0.04, linestyle=dashed, dash=0.17638889cm 0.10583334cm](9.218671,0.610011)(9.618671,1.4100109)(9.218671,2.610011)(10.018671,3.810011)(10.018671,3.810011)
\psdots[linecolor=black, dotsize=0.4](10.418671,5.010011)
\psdots[linecolor=black, dotsize=0.2](8.358671,-2.069989)
\psdots[linecolor=black, dotsize=0.2](9.398671,0.250011)
\psdots[linecolor=black, dotsize=0.2](9.418671,2.050011)
\psdots[linecolor=black, dotsize=0.2](6.7999997,-0.789989)
\psdots[linecolor=black, dotsize=0.2](7.32,0.410011)
\psbezier[linecolor=black, linewidth=0.04, linestyle=dotted, dotsep=0.10583334cm](10.438671,5.010011)(10.438671,4.210011)(10.978671,2.950011)(9.458671,2.070011)
\psbezier[linecolor=black, linewidth=0.04, linestyle=dotted, dotsep=0.10583334cm](10.438671,5.030011)(10.438671,4.230011)(11.638671,3.910011)(9.398671,0.290011)
\psbezier[linecolor=black, linewidth=0.04, linestyle=dotted, dotsep=0.10583334cm](10.458671,4.930011)(10.458671,4.130011)(13.838671,2.430011)(8.358671,-2.009989)
\rput(4.2,-0.789989){ \Huge{$A_\xi$}}
\rput(4,6.2){\LARGE{$\{t_\zeta:\zeta<\xi\}$}}
\rput(11.2,6.2){\LARGE{$t_\xi=\psi(x_\xi)$}}
\rput(16.5,5){\Huge{$T_\delta$}}
\rput(7.6,2.050011){\LARGE{$\varphi(x_\xi\uhp k)$}}
\rput(7.9,-3){\Large{$\psi(\emptyset)$}}
\end{pspicture}
} \caption{ Step $\xi$ in the induction.}
\end{figure}

We extend $\psi$ to $2^\omega$ in the obvious way:

$$\psi(x)=\bigcup\{\psi(x\uhp k):k<\oo\}\cup \{\delta\}$$

for $x\in 2^\oo$; note that $\psi(x)$ is a closed subset of $S$ by the second part of condition (\ref{branchcond}) and hence $\psi(x) \in T_{\delta}$ for all $x\in 2^\oo$. Also, $\psi$ remains 1-1 on $2^\oo$ by the first part of condition (\ref{branchcond}). Hence, we can find an $x_\xi \in 2^\omega$ such that $\psi(x_\xi)\in T_{\delta}\setm \{t_\zeta:\zeta<\xi\}$ and we let $t_\xi=\psi(x_\xi)$. Finally, let $$C_{t_\xi}=\bigl \{\varphi(x_\xi\uhp k):k<\oo, x_\xi\uhp k\in \dom(\varphi)\bigr \}.$$ 

Transitivity of this extension is assured by condition (\ref{transcond}). This finishes the induction on $\xi<\mf c$ and we have a transitive ladder system $\ul C=(C_t:t\in T_{\leq \delta})$ on $T_{\leq \delta}$. We now simply let $C_t=\emptyset$ for $t\in T_{<\delta^+}\setm T_{\leq\delta}$. This finishes step $\delta$ of the main induction and hence, in the end, we have a transitive ladder system $\ul C$ on $T$.

 We are left to prove:

\begin{claim}$Chr(X_{\ul C})>\omega$.
\end{claim}
\begin{proof} Fix a colouring $f:T\to \omega$; we will find $s,t\in T$ so that $f(s)=f(t)$ and $s\in C_t$. Take a countable elementary submodel $M\prec H(\omega_2)$ (where $H(\omega_2)$ is the collection of sets with hereditary cardinality $\leq\omg$) so that $S,\ul C, f\in M$ and $\delta=M\cap \omg\in S$; this can be done as $S$ is stationary. 

Consider the construction of $\{C_t:t\in T_{\delta}\}$. If we set $A=M\cap T$ then there must be a $\xi<\mf c$ so that $(A,f\uhp A)=(A_\xi,f_\xi)$; $M$ being an elementary submodel ensures that $A$ satisfies property (1) as the branching of $T$ reflects to $M$.

Our goal now is to prove that there is an $s\in C_{t_\xi}$ so that $f(s)=f(t_\xi)$; we let $n=f(t_\xi)$. Recall that in the definition of $t_\xi$ we had two maps $\psi$ and $\varphi$ and $t_\xi$ was of the form $\psi(x_\xi)$ for an $x_\xi\in 2^\oo$. $C_{t_\xi}$ was defined to be $\{\varphi(x_\xi\uhp k):k<\oo, x_\xi\uhp k\in \dom(\varphi)\}$. 

Now, recall the definition of $\varphi(x_\xi\uhp n)$: we looked at the set 
\begin{multline}
R_{x_\xi\uhp n}=\{s\in A_\xi:s\geq \psi(x_\xi\uhp n), f_\xi(s)=n \text{ and }\\ 
\{\varphi(x_\xi\uhp k):k< n, x_\xi \uhp k\in \dom(\varphi)\}\cup\{s\} \text{  is complete}\}
\end{multline}

and if $R_{x_\xi\uhp n}$ was not empty the we chose $\varphi(x_\xi\uhp n)\in R_{x_\xi\uhp n}$; in particular, $f(s)=n$ for $s=\varphi(x_\xi\uhp n) \in C_{t_\xi}$ which would finish the proof.

Let us show that  $R_{x_\xi\uhp n}$ is not empty. Let
\begin{multline}
R=\{s\in T:s\geq \psi(x_\xi\uhp n), f(s)=n \text{ and }\\ 
\{\varphi(x_\xi\uhp k):k< n, x_\xi \uhp k\in \dom(\varphi)\}\cup\{s\} \text{  is complete}\}
\end{multline}
and note that $R$ is in the model $M$ and $R_{x_\xi\uhp n}=R\cap M$. Hence, by elementarity, it suffices to show that $R\neq \emptyset$. However, this is clear as $t_\xi\in R$.

\end{proof}

This finishes the proof of the theorem.
\end{proof}

\section{A highly disconnected variation}\label{consec2}

In Theorem \ref{mainthm}, we constructed a graph $X$ such that any uncountable set $A$ \emph{contained} two incomparable points $s,t$ which are \emph{separated in A} by a finite set $F$; in particular, there could be paths connecting $s$ and $t$ which avoid $F$ by leaving $A$. The aim of this section is to refine the methods of Section \ref{consec} and produce a ladder system $\ul C$ on a tree $T$ such that \emph{any two $<_T$-incomparable vertices} are separated by a finite set in the graph $X_{\ul C}$ i.e. if $s,t\in T$ are incomparable then there is a finite set $F\subs T$ such that \emph{any path $P$} from $s$ to $t$ intersects $F$. Note that this separation property is stronger than the lack of uncountable $\omega$-connected sets.

We use the following notation: if $T$ is a tree then
\begin{itemize}
	\item let $supp(\ul C)=\{t\in T: |C_t|=\omega\}$ for any ladder system $\ul C$ on $T$,
	\item if $s\in T$ and $\varepsilon < ht(s)$ then $s\uhp \varepsilon$ denotes the unique element $r\in \da s$ with $ht(r)=\varepsilon$.
\end{itemize}

Let us introduce a somewhat technical property of ladder systems on trees. First, we need the following definition: we will say that a sequence $\ul \eta=(\eta_t:t\in T)$ is a \emph{true ladder system} on a tree $T$ iff $\eta_t=\{t\}$ for all successor $t\in T$ and $\eta_t$ is a cofinal sequence of type $\omega$ in $\da t$ if $t$ is limit. 

\begin{definition}
Suppose that $\ul C$ is a ladder systems on a tree $T$. We say that $\ul C$ is \emph{coherent} iff
\begin{enumerate}
		\item $C_s=C_t\cap \da s$ 
	\end{enumerate}
	for all $t\in supp(\ul C)$ and $s\in C_t$ with $|C_s|<\oo$ and there is a true ladder system $\ul \eta$ on $T$ such that
 \begin{enumerate}
\setcounter{enumi}{1}
 \item  $\eta_t\cap \da s \sqsubseteq \eta_s$,
\item $C_t\cap \da r = C_s \cap \da r$ for $\displaystyle{r=s\uhp ht(\max_{<_T}(\eta_t\cap \da s))+1}$
\end{enumerate}
  for every $s,t\in supp(\ul C)$ with $s\in C_t$.
	
\end{definition}

The next lemma explains how transitivity and coherence of a ladder system $\ul C$ gives the desired separation property of the graph $X_{\ul C}$.

\begin{lemma}\label{etacoh}
 Suppose that $T$ is a tree with no branching at limits. If $\ul C$ is a transitive and coherent ladder system on $T$ then any two $<_T$-incomparable points are separated by a finite set in $X_{\ul C}$.

\end{lemma}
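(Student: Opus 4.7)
My plan is to define a finite separating set $F$ from $s, t$ and the coherence data $\ul\eta$, then reduce any path from $s$ to $t$ to a V-shape via Lemma~\ref{mon} and show it must cross $F$. Let $r$ be the maximal common initial part of $s$ and $t$ (well-defined since $T$ has no branching at limit levels). Enumerate $\eta_s = \{e^s_i : i < \omega\}$ increasingly (or $\{s\}$ if $s$ is a successor) and let $e_s$ be the least $e^s_i \geq r$; this exists since $\eta_s$ is cofinal in $\da s$ and $r < s$. Define $e_t$ analogously and set
\[
F_s := \{r\} \cup \bigl(C_s \cap (\da e_s \cup \{e_s\})\bigr),\quad F_t := \{r\} \cup \bigl(C_t \cap (\da e_t \cup \{e_t\})\bigr),\quad F := F_s \cup F_t.
\]
$F$ is finite because $C_s$ is either finite or of order type $\omega$ cofinal in $\da s$, so $C_s \cap (\da e_s \cup \{e_s\})$ is finite (analogously for $t$). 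By Lemma~\ref{mon}, any path from $s$ to $t$ contains a subpath $s = p_0 > p_1 > \cdots > p_k < \cdots < p_n = t$ with $p_k \leq r$, and it suffices to show such a V-shape meets $F$; by symmetry I focus on the descent from $s$, all of whose vertices lie on the branch of $s$.

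The central claim is: setting $j^* := \min \{j : p_j \leq e_s\}$ (well-defined with $1 \leq j^* \leq k$ since $p_k \leq r \leq e_s$ but $s \not\leq e_s$), one has $p_{j^*} \in C_s$, whence $p_{j^*} \in F_s$. Suppose not, and let $j'$ be the largest index $\leq j^*$ with $p_{j'} \in C_s \cup \{s\}$; then $j' \geq 1$ (as $p_1 \in C_s$) and $j' < j^*$, so $p_{j'+1}, \ldots, p_{j^*}$ all avoid $C_s$. Let $i^*$ be the largest $i$ with $e^s_i < p_{j'}$, placing $p_{j'}$ in the $\eta_s$-``zone'' $(e^s_{i^*}, e^s_{i^*+1}]$. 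I claim, by induction on $l = 1, \ldots, j^*-j'$, that $p_{j'+l}$ lies in the same zone. For $l=1$: coherence~(3) applied to $p_{j'+1} \in C_{p_{j'}}$ produces a threshold $r_{j'}$ of height $ht(e^s_{i^*})+1$ with $C_s \cap \da r_{j'} = C_{p_{j'}} \cap \da r_{j'}$; since $p_{j'+1} \notin C_s$, we must have $p_{j'+1} \not< r_{j'}$, so $ht(p_{j'+1}) > ht(e^s_{i^*})$, placing $p_{j'+1}$ in zone $i^*$. For the inductive step, coherence~(2) guarantees each $\eta_{p_{j'+k-1}}$ extends the common initial segment $\{e^s_0, \ldots, e^s_{i^*}\}$, so every threshold arising from coherence~(3) on $(p_{j'+k-1}, p_{j'+k})$ has height $> ht(e^s_{i^*})$; if $p_{j'+l+1}$ had height $\leq ht(e^s_{i^*})$, then it would sit below every one of these thresholds, a telescoping through $(p_{j'+l-1},p_{j'+l}), \ldots, (p_{j'},p_{j'+1})$ would place $p_{j'+l+1} \in C_{p_{j'}}$, and then coherence~(3) on $(s, p_{j'})$ would further place $p_{j'+l+1} \in C_s$, contradicting our assumption. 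Thus $p_{j'+l+1}$ stays in zone $i^*$. Finally $p_{j^*} \leq e_s = e^s_{i_0}$ forces $i^* < i_0$, hence $p_{j'} \leq e^s_{i^*+1} \leq e_s$, contradicting the minimality of $j^*$.

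Elements $p_j$ with $|C_{p_j}| < \omega$ are handled via coherence~(1): then $C_{p_j} = C_{p_{j-1}} \cap \da p_j$, so any $p_{j+1} \in C_{p_j}$ already lies in $C_{p_{j-1}}$, allowing us to skip non-supp vertices and apply the chain of coherence~(3) only between supp-neighbors; the trivial case where $s$ itself is a successor just gives $p_1 \in C_s \subseteq F_s$ immediately. The main technical obstacle is the telescoping in the inductive step: individual applications of coherence~(3) only equate ladders below local thresholds, and one must verify that the intersection of these local thresholds sits \emph{uniformly} above $e^s_{i^*}$---this is precisely what coherence~(2) guarantees, by forcing every $\eta_{p_{j'+k}}$ to carry the same initial segment $\{e^s_0, \ldots, e^s_{i^*}\}$ of $\eta_s$. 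The ascent from $t$ is handled symmetrically, producing a vertex in $F_t$, so $F$ separates $s$ from $t$.
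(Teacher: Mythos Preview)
Your approach is genuinely different from the paper's. The paper fixes one endpoint $t'$ and argues by induction on $ht(t)$: for $t\in supp(\ul C)$ it sets $F_t=C_t\cap\da r_t$ with $r_t=t\uhp ht(\min\{r\in\eta_t:r>\Delta(t,t')\})+1$, and the entire proof reduces to the single observation that if $p_1\in C_t$ is the first step of a minimal path avoiding $F_t$, then coherence forces $F_{p_1}=F_t$ exactly, so the inductive hypothesis applied to $p_1$ finishes immediately. Your zone-tracking argument unwinds this induction into an explicit telescoping along the whole descent; it is more hands-on but recovers the same information.

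There is, however, a genuine gap. Your base step applies coherence~(3) to the pair $(s,p_{j'})$ to produce the threshold $r_{j'}$ of height $ht(e^s_{i^*})+1$; this requires $s\in supp(\ul C)$. You dispose of the case where $s$ is a \emph{successor} (then $e_s=s$ and $F_s=\{r\}\cup C_s$, so $p_1\in F_s$), but you never treat the case where $s$ is a \emph{limit} element with $|C_s|<\omega$. In that situation $\eta_s$ is still a genuine cofinal $\omega$-sequence, so your $e_s<s$ and your $F_s=\{r\}\cup\bigl(C_s\cap(\da e_s\cup\{e_s\})\bigr)$ can omit elements of $C_s$ lying strictly above $e_s$; meanwhile none of coherence (1), (2), or (3) applies to the pair $(s,p_1)$ because they all require the larger vertex to lie in $supp(\ul C)$. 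So you have no control relating $C_s$ to $C_{p_1}$ or $\eta_s$ to $\eta_{p_1}$, and the zone argument cannot start. The same issue lurks in your skipping step: ``$C_{p_j}=C_{p_{j-1}}\cap\da p_j$'' via coherence~(1) presupposes $p_{j-1}\in supp(\ul C)$, which you have not established when $j=1$ and $s\notin supp(\ul C)$.

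The fix is immediate and is exactly what the paper does: when $|C_s|<\omega$ (successor or limit), simply take $F_s=\{r\}\cup C_s$; then $p_1\in F_s$ and there is nothing to prove on that side. Once $s\in supp(\ul C)$, your argument that $p_{j'}\in supp(\ul C)$ goes through (otherwise coherence~(1) gives $p_{j'+1}\in C_s$, contradicting the maximality of $j'$), and the same reasoning lets you contract away non-supp vertices further down, so the zone induction is sound.
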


We use the following notation in the proof: if $s,t\in T$ then $\Delta(s,t)$ denotes the maximal common initial segment of $s$ and $t$. 

\begin{proof} Fix a point $t'\in T$ and we prove that for any $t\in T$ which is incomparable with $t'$ there is a finite set $F_t$ which separates $t$ and $t'$.

We will actually prove that the following choice of $F_t$ works: let $F_t=C_t$ if $t\notin supp(C)$ and let $$F_t=C_t\cap \da r_t \text{ where } r_t=t\uhp ht(\min_{<_T}\{r\in \eta_t:r>\Delta(t,t')\})+1$$ if $t\in supp(\ul C)$.  The proof is done by induction on $ht(t)$. 

It is clear that if $t\notin supp(\ul C)$ then $F_t$ separates $t$ and $t'$ by transitivity and Lemma \ref{mon}.

Suppose that $t\in supp(\ul C)$; we note that $r_t<t$ as $\eta_t$ is nontrivial and $\Delta(t,t')<t$ and hence $F_t$ is finite. Now, suppose that $P=\{p_0\dots p_n\}$ is a path from $t$ to $t'$ which avoids $F_t$; we can suppose that $P$ has minimal length and is the union of two monotone paths by Lemma \ref{mon}. We have $p_0=t>p_1\in C_t$ and $p_1>\Delta(t,t')$ and hence $\Delta(t,t')=\Delta(p_1,t')$. We let $s=p_1$ and note that $p_2\in C_s\setm C_t$ as $P$ has minimal length and $\ul C$ is transitive. If $|C_s|<\omega$ then by assumption $C_t\cap \da s= C_s$ which contradicts $p_2\in C_s\setm C_t$.

We conclude that $s\in supp(\ul C)$. As $F_s$ separates $s$ and $t'$ (by the inductive hypothesis) it suffices to prove that $F_s=F_t$. By definition $$r_s=s\uhp ht(\min_{<_T}\{r\in \eta_s:r>\Delta(s,t')\})+1.$$ As $t,s\in supp(\ul C)$ we have $\eta_t\cap \da s\sqsubseteq \eta_s$ by coherence. Hence, by $r_t<s$ and $\Delta(t,t')=\Delta(s,t')$ we must have $r_s=r_t$. Finally, by coherence again, we have $F_t=C_t\cap \da{r_t}=C_s\cap \da{r_s}=F_s$.
\end{proof}

We are ready to prove the main result of this section.

\begin{theorem} \label{mainthm2}
 Fix a stationary, costationary $S\subs \omega_1$ and let $T=T(S)$. Then there is a subgraph $X$ of $G(T)$ such that $Chr(X)=\omg$ and any two $<_T$-incomparable points are separated by a finite set in $X$. In particular, every uncountable set $A\subseteq T$ contains two vertices which are separated by a finite set in $X$.
\end{theorem}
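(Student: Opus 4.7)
The plan is to refine the construction in Theorem \ref{mainthm} to produce a ladder system $\ul C$ on $T$ that is both \emph{transitive} and \emph{coherent} with respect to some true ladder system $\ul \eta$ built alongside. Then Lemma \ref{etacoh} immediately yields that any two $<_T$-incomparable vertices are separated by a finite set in $X_{\ul C}$; since $T$ has no uncountable chains, every uncountable $A\subseteq T$ contains an incomparable pair and so the second conclusion follows. The chromatic number equals $\omg$ by essentially the same elementary submodel argument as in Theorem \ref{mainthm}.

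I would proceed by induction on $\delta \in S'$, building $\ul C$ and $\ul \eta$ in tandem. At step $\delta \in S$, substage $\xi < \mf c$, I construct maps $\psi, \varphi : 2^{<\oo} \to A_\xi$ exactly as before, and declare $\eta_{t_\xi} := \{\psi(x_\xi \uhp k) : k < \oo\}$, which is cofinal of type $\omega$ in $\da{t_\xi}$. The new ingredient is a strengthening of the candidate set $R_{x_\xi \uhp n}$: I require the chosen $s = \varphi(x_\xi \uhp n)$ to additionally satisfy (a) $ht(s) > ht(\psi(x_\xi \uhp n))$ strictly, and (b) either $|C_s| < \oo$ with $C_s = \{\varphi(x_\xi \uhp k) : k < n\}$, or $|C_s| = \oo$ with $C_s \cap \da r = \{\varphi(x_\xi \uhp k) : k < n\}$, where $r := s \uhp (ht(\psi(x_\xi \uhp n)) + 1)$. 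A direct height comparison using (a) shows that $C_{t_\xi} \cap \da r = \{\varphi(x_\xi \uhp k) : k < n\}$ also; this, together with $\eta_{t_\xi}$ being the $\psi$-sequence, yields all three coherence conditions for the pair $(s, t_\xi)$. An inductive bookkeeping (one must also ensure $\eta_t \cap \da s \sqsubseteq \eta_s$) propagates coherence across stages of the main induction.

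The main obstacle and the only genuine novelty beyond Theorem \ref{mainthm} is to verify that the strengthened $R_{x_\xi \uhp n}$ remains non-empty, since this underlies the chromatic number argument. The witness is still $t_\xi$ itself: it already satisfies the original clauses, and the new clause (b) holds in the infinite-ladder alternative precisely by the height computation sketched above (which requires (a) on the earlier $\varphi$-choices, arranged inductively). Thus the outer-universe set $R^{\text{new}}$ is non-empty, and since all parameters — in particular $\ul C \uhp T_{<\delta}$ and $\ul \eta \uhp T_{<\delta}$ — lie in $M$, elementarity gives $R^{\text{new}} \cap M \neq \emptyset$ and furnishes the required $s \in A_\xi$ with $f(s) = n$. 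For the construction proper, the non-emptiness is guaranteed by property $(\star)$ of $A_\xi$, which allows us to choose candidates $s \geq \psi(x_\xi \uhp n)$ of arbitrarily large height, combined with the fact that the desired ladder pattern $\{\varphi(x_\xi \uhp k) : k<n\}$ becomes one of the admissible patterns exhausted by a suitable refinement of the enumeration $\{(A_\xi, f_\xi) : \xi < \mf c\}$. Transitivity, the extension over non-stationary $\delta$, and the upper bound $Chr(X_{\ul C}) \leq \omg$ all go through verbatim as in Theorem \ref{mainthm}.
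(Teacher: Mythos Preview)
Your high-level plan is correct and matches the paper's: build a transitive, coherent ladder system and invoke Lemma~\ref{etacoh}. However, there are two genuine gaps in your execution.

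\textbf{The chromatic argument does not go through.} You assert that $t_\xi$ witnesses $R^{\mathrm{new}}\neq\emptyset$ because ``clause (b) holds in the infinite-ladder alternative''. But you have not shown that $|C_{t_\xi}|=\omega$. If $C_{t_\xi}$ is finite with maximal index $k_0\geq n=f(t_\xi)$, then the finite alternative of (b) requires $C_{t_\xi}=\{\varphi(x_\xi\uhp k):k<n\}$, which fails since $\varphi(x_\xi\uhp k_0)\in C_{t_\xi}$ with $k_0\geq n$; and the infinite alternative is simply inapplicable. So $t_\xi\notin R^{\mathrm{new}}$ and your elementarity step collapses. The paper handles this by replacing ``colour $n$ at level $n$'' with a sequence $(l_n)$ listing every colour infinitely often, and by running the elementarity argument twice: first choosing $n\geq n_0$ with $l_n=f(t_\xi)$ to derive a contradiction from $|C_{t_\xi}|<\omega$, and only then locating the monochromatic edge. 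This in turn forces the enumeration at stage $\delta$ to range over increasing chains $(A_n)_{n<\omega}$ rather than a single $A$, and the witnessing model to be a union of an increasing chain $(M_n)_{n<\omega}$ with $A_n=T\cap M_n$, so that the level-$n$ parameters lie in $M_n$.

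\textbf{The $\ul\eta$-coherence is not secured.} Declaring $\eta_{t_\xi}=\{\psi(x_\xi\uhp k):k<\omega\}$ on the fly means that for $s=\varphi(x_\xi\uhp n)\in C_{t_\xi}\cap\mathrm{supp}(\ul C)$ you need $\{\psi(x_\xi\uhp k):k\leq n\}\sqsubseteq \eta_s$; but $\eta_s$ was fixed at an earlier stage with no reference to the current $\psi$. Your ``inductive bookkeeping'' remark acknowledges the problem without solving it. The paper sidesteps this entirely by fixing $\ul\eta$ \emph{globally in advance}: choose a true ladder $(\nu_\delta)_{\delta<\omg}$ on $\omg$ and set $\eta_t=\{t\cap(\varepsilon+1):\varepsilon\in\nu_{\max t}\}$. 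Coherence of $\ul\eta$ then reduces to the purely numerical condition $\nu_\delta\cap\varepsilon_{n-1}\sqsubseteq\nu_{\max(s)}$, which is added to the requirements on $s$ and is satisfied by $t_\xi$ automatically since $\max(t_\xi)=\delta$. This is also why the paper needs the thresholds $\varepsilon_n$ built from both $\delta^\xi_n$ and $\nu_\delta$, and hence the chain $(A_n)$.

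Finally, your closing sentence about non-emptiness ``for the construction proper'' is confused: during the construction, emptiness of $R^\xi_x$ is harmless (one simply sets $x\notin\dom(\varphi)$); non-emptiness only matters in the chromatic argument, where the point above applies.
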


\begin{proof}
It suffices to find a transitive and coherent ladder system $\ul C$ on $T$ such that $Chr(X_{\ul C})>\oo$; indeed, $T$ does not branch at limits (and every uncountable $A\subseteq T$ contains two incomparable points) hence Lemma \ref{etacoh} can be applied.

We define a true ladder system $\ul \eta$ on $T$ first: fix a true ladder system $\{\nu_\delta:\delta\in \omg\}$ on $\omg$ and let $$\eta_t=\{t\cap (\vareps+1):\vareps\in \nu_\delta\} \text{ where } \delta=\max(t)$$ for any limit $t\in T$ and let $\eta_t=\{t\}$ for any successor $t\in T$.

 Let $D=(S\cap S')'$ and let $\delta^+$ denote $\min D\setm (\delta+1)$ for $\delta\in D$. By induction on $\delta\in D$, we define a transitive ladder system $\ul C$ on $T_{<\delta}$, and hence the corresponding graph on $T_{<\delta}$, so that $\ul C$ is coherent and its coherence is witnessed by $\ul \eta$.

Suppose we defined $(C_t)_{t\in T_{<\delta}}$ for some $\delta\in D$. We define $\ul C$ on $T_{<\delta^+}$ in two steps: first we define $C_t$ for $t\in T_{\delta}$ so that $(C_t)_{t\in T_{\leq \delta}}$ is still transitive and coherent and then extend to $T_{<\delta^+}$ in the trivial way: we let $C_t=\emptyset$ for $t\in T_{<\delta^+}\setm T_{\leq \delta}$. 

We can suppose $\delta \in S$ otherwise $T_\delta=\emptyset$. Let $\{((A^\xi_{ n})_{n\in \omega}, f_\xi):\xi<\mf c\}$ denote a 1-1 enumeration of all pairs  $((A_n)_{n\in \omega}, f)$  where 
\begin{enumerate}
\item $A_n\subs A_{n+1}\in [T_{<\delta}]^{\oo}$, 
\item\label{br2} for every $t\in A_n$ and $\vareps<\delta_n=\sup\{\max(s):s\in A_n\}$ there are incomparable $s^0,s^1\in A_n$ so that $s^i\geq t$  and $\max(s^i)> \vareps$ for $i<2$,

\item $f_\xi:A\to \omega$ where $A=\bigcup \{A_n:n\in\omega\}$,
\item $(\delta_n)_{n\in \oo}$ is a strictly increasing cofinal sequence in $\delta$.
\end{enumerate}

By induction on $\xi<\mf c$, we define $t_\xi\in T_{\delta}\setm\{t_\zeta:\zeta<\xi\}$ and $C_{t_\xi}\subseteq \da t_\xi$ while preserving transitivity and coherence. Suppose we defined $t_\zeta\in T_\delta$ and $C_{t_\zeta}\subseteq \da t_\zeta$ for $\zeta<\xi$.

We let $A_\xi =\bigcup \{A^\xi_{n}:n\in \omega\}$ and $\delta^\xi_{ n}=\sup\{\max(s):t\in A^\xi_{ n}\}$. Also, let $$\vareps_n=\max (\{\delta^\xi_n\}\cup (\nu_\delta\cap \delta^\xi_{n+1}))$$ for $n\in \oo$ and $\vareps_{-1}=\max(\nu_\delta\cap \delta^\xi_0)$. Finally, let $\{l_n:n\in \oo\}$ enumerate each natural number infinitely many times.

Define a map $\psi:2^{<\omega}\to A_\xi$ and a partial map $\varphi:2^{<\omega}\to A_\xi$ so that

\begin{enumerate}[(i)]

\item \label{cond1} $\psi$ and $\varphi$ are order preserving injections and $$\psi(x)\leq \varphi(x)\leq \psi(x\smf i)$$ for $i<2$ provided that $x\in \dom(\varphi)$,
\item \label{transcond2} $\{\varphi(x\uhp k):k\leq |x|, x\uhp k\in \dom(\varphi)\}$ is a complete graph in $T_{<\delta}$,
\item \label{branchcond2} $\psi(x\smf 0)$ and $\psi(x\smf 1)$ are incomparable and contained in $A^\xi_{n+1}\setm T_{<\varepsilon_n}$,
\item  \underline{if} there is an $s\in A^\xi_n$ such that
\begin{enumerate}
	\item $s\geq \psi(x)$, $f_\xi(s)=l_n$,
	\item $\{\varphi(x\uhp k):k< |x|, x\uhp k\in \dom(\varphi)\}\cup\{s\}$ is complete,
\item \label{infcond} $\nu_\delta\cap \varepsilon_{n-1}\sqsubseteq \nu_{\max(s)}$ and
 $$C_s\cap \da r= \{\varphi(x\uhp k):k< n, x\uhp k\in \dom(\varphi)\}$$ for $r=s\cap (\vareps_{n-1}+1)$ if $C_s$ is infinite,
\item \label{finite} $C_s=\{\varphi(x\uhp k):k< n, x\uhp k\in \dom(\varphi)\}$ if $C_s$ is finite
\end{enumerate} 
\underline{then} $x\in \dom(\varphi)$ and $s=\varphi(x)$ satisfies (a)-(d) as well,
\item if (iv) fails then $x\notin \dom(\varphi)$
\end{enumerate}
 for all $n\in \omega$ and $x\in 2^n$.

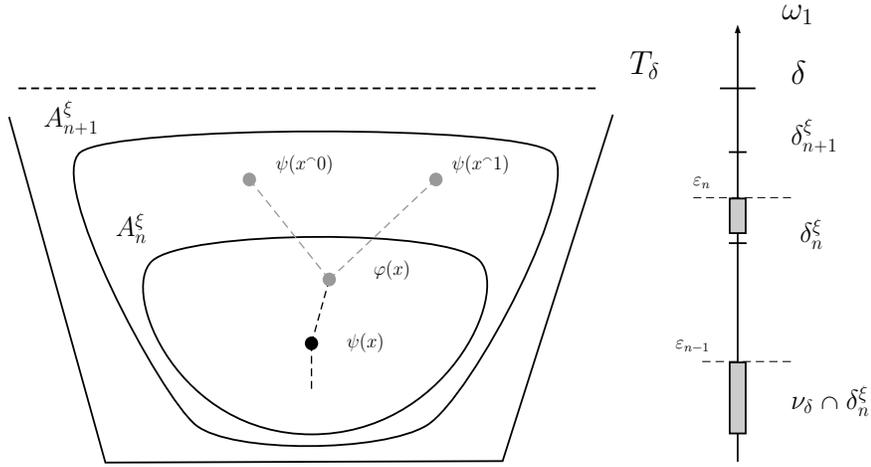
\begin{figure}[H]
\centering

\resizebox{13 cm}{6 cm}{
\begin{pspicture}(0,-4.9526877)(21.751286,4.9526877)
\definecolor{colour0}{rgb}{0.6,0.6,0.6}
\definecolor{colour1}{rgb}{0.8,0.8,0.8}
\psline[linecolor=black, linewidth=0.03, linestyle=dashed, dash=0.17638889cm 0.10583334cm](6.831285,-2.3133376)(7.2312846,-0.9133375)
\psline[linecolor=black, linewidth=0.04](0.019225586,2.6594565)(2.188918,-4.932619)(11.136232,-4.8997703)(13.6191435,2.709386)(13.6191435,2.709386)
\psline[linecolor=black, linewidth=0.04, arrowsize=0.05291666666666667cm 2.0,arrowlength=1.4,arrowinset=0.0]{<-}(16.431284,4.6866627)(16.431284,-4.9133377)(16.431284,-4.9133377)
\psline[linecolor=black, linewidth=0.04](16.031284,3.2866623)(16.831285,3.2866623)(16.831285,3.2866623)
\psline[linecolor=black, linewidth=0.04](16.231285,1.8866625)(16.631285,1.8866625)(16.631285,1.8866625)
\psline[linecolor=black, linewidth=0.04](16.231285,-0.113337554)(16.631285,-0.113337554)(16.631285,-0.113337554)
\psline[linecolor=black, linewidth=0.04, linestyle=dashed, dash=0.17638889cm 0.10583334cm](13.231285,3.2866623)(0.2312848,3.2866623)

\psbezier[linecolor=black, linewidth=0.04](1.6312848,1.8866625)(2.4514117,2.4779167)(11.431285,2.5066624)(12.231285,1.8866625)(13.031284,1.2666625)(10.431285,-3.5133376)(9.431285,-4.1133375)(8.431285,-4.7133374)(5.031285,-4.7333374)(4.2312846,-4.1133375)(3.431285,-3.4933376)(0.8111578,1.2954081)(1.6312848,1.8866625)
\psbezier[linecolor=black, linewidth=0.04](3.2312849,-0.51333755)(3.9562843,0.17541191)(9.96449,0.23190376)(10.631285,-0.51333755)(11.2980795,-1.2585789)(9.8312845,-4.3133373)(6.831285,-4.3133373)(3.8312848,-4.3133373)(2.5062854,-1.202087)(3.2312849,-0.51333755)

\psdots[linecolor=black, dotsize=0.3](6.831285,-2.3133376)
\psdots[linecolor=colour0, dotsize=0.3](7.2312846,-0.9133375)
\psdots[linecolor=colour0, dotsize=0.3](5.431285,1.2866625)
\psdots[linecolor=colour0, dotsize=0.3](9.631285,1.2866625)
\psframe[linecolor=black, linewidth=0.04, fillstyle=solid,fillcolor=colour1, dimen=outer](16.631285,0.8866624)(16.231285,0.08666244)
\psframe[linecolor=black, linewidth=0.04, fillstyle=solid,fillcolor=colour1, dimen=outer](16.631285,-2.7133377)(16.231285,-4.3133373)
\psline[linecolor=colour0, linewidth=0.03, linestyle=dashed, dash=0.17638889cm 0.10583334cm](7.2312846,-0.9133375)(5.431285,1.2866625)
\psline[linecolor=colour0, linewidth=0.03, linestyle=dashed, dash=0.17638889cm 0.10583334cm](7.2312846,-0.9133375)(9.631285,1.2866625)
\psline[linecolor=black, linewidth=0.03, linestyle=dashed, dash=0.17638889cm 0.10583334cm](6.831285,-2.3133376)(6.831285,-3.3133376)

\psline[linecolor=black, linewidth=0.02, linestyle=dashed, dash=0.17638889cm 0.10583334cm](15.631285,-2.7133377)(17.631285,-2.7133377)
\psline[linecolor=black, linewidth=0.02, linestyle=dashed, dash=0.17638889cm 0.10583334cm](15.431285,0.8866624)(17.631285,0.8866624)

\rput[bl](0.8,2.25){\Large{$A^\xi_{n+1}$}}
\rput[bl](2.42,-0.113337554){\Large{$A^\xi_n$}}
\rput[bl](15.031284,-2.5133376){$\varepsilon_{n-1}$}
\rput[bl](15.431285,1.0866624){$\varepsilon_n$}
\rput[bl](14,3.5){\LARGE{$T_\delta$}}
\rput[bl](6.031285,1.4){$\psi(x\smf 0)$}
\rput[bl](10,1.4){$\psi(x\smf 1)$}
\rput[bl](7.6312847,-2.5133376){$\psi(x)$}
\rput[bl](8.231285,-0.9133375){$\varphi(x)$}
\rput[bl](17.631285,-3.9133375){\Large{$\nu_\delta\cap \delta^\xi_n$}}
\rput[bl](17.431284,4.6866627){\LARGE{$\omg$}}
\rput[bl](17.631285,3.3866624){\LARGE{$\delta$}}
\rput[bl](17.831285,-0.25){\Large{$\delta^\xi_n$}}
\rput[bl](17.631285,1.86){\Large{$\delta^\xi_{n+1}$}}
\end{pspicture}
}

\caption{Extending the maps $\varphi$ and $\psi$.}%
\label{}%
\end{figure}

We define $\psi(x)$ and $\varphi(x)$ for $x\in 2^n$ by induction on $n\in \omega$. We let $\psi(\emptyset)\in A^\xi_0\setm T_{\leq \vareps_{-1}}$ arbitrarily. Given $\psi(x)$, we consider the set $R^\xi_x$ of all elements $s\in A^\xi_n$ such that
\begin{enumerate}[(a')]
	\item $s\geq \psi(x)$, $f_\xi(s)=l_n$,
	\item $\{\varphi(x\uhp k):k< |x|, x\uhp k\in \dom(\varphi)\}\cup\{s\}$ is complete,
\item  $\nu_\delta\cap \varepsilon_{n-1}\sqsubseteq \nu_{\max(s)}$ and
 $$C_s\cap \da r= \{\varphi(x\uhp k):k< n, x\uhp k\in \dom(\varphi)\}$$ for $r=s\cap (\vareps_{n-1}+1)$ if $C_s$ is infinite,
\item $C_s=\{\varphi(x\uhp k):k< n, x\uhp k\in \dom(\varphi)\}$ if $C_s$ is finite.
\end{enumerate} 

If $R_x^\xi$ is not empty then we set $x\in \dom(\varphi)$ and choose an arbitrary $\varphi(x)\in R_x^\xi$. Otherwise, $x\notin \dom(\varphi)$. Now, we simply pick $\psi(x\smf i)$ for $i<2$ satisfying conditions (\ref{cond1}) and (\ref{branchcond2}) by applying condition (\ref{br2}) for $A^\xi_{n+1}$. This finishes the construction of $\psi$ and $\varphi$.

We extend $\psi$ to $2^\omega$ in the obvious way:

$$\psi(x)=\bigcup\{\psi(x\uhp k):k<\oo\}\cup \{\delta\}$$

for $x\in 2^\oo$; note that $\psi(x)$ is a closed subset of $S$ by the second part of condition (\ref{branchcond}) hence $\psi(x) \in T_{\delta}$ for all $x\in 2^\oo$ . Also, $\psi$ remains 1-1 on $2^\oo$ by the first part of condition (\ref{branchcond}). Hence, we can find an $x_\xi \in 2^\omega$ such that $\psi(x_\xi)\in T_{\delta}\setm \{t_\zeta:\zeta<\xi\}$ and we let $t_\xi=\psi(x_\xi)$. Finally, let $$C_{t_\xi}=\bigl \{\varphi(x_\xi\uhp k):k<\oo, x_\xi\uhp k\in \dom(\varphi)\bigr \}.$$  This finishes the induction on $\xi<\mf c$ and we define $C_t=\emptyset$ for $t\in T_\delta\setm \{t_\xi:\xi<\mf c\}$.

\begin{claim} $\{C_t:t\in T_{\leq \delta}\}$ is transitive and coherent.
\end{claim}
\begin{proof} Transitivity is assured by condition (\ref{transcond2}). We check that $\ul \eta$ witnesses that $\{C_t:t\in T_{\leq \delta}\}$ is coherent. Fix $\xi<\mf c$ and $n<\oo$ such that $x_\xi \uhp n\in \dom(\varphi)$ i.e. $s=\varphi(x_\xi\uhp n)\in C_{t_\xi}$.

If $C_s$ is finite then we need that $C_s=C_{t_\xi}\cap \da s$; this is assured by condition (\ref{cond})-(d) above. Suppose that $C_s$ is infinite; we need to check that $$\eta_{t_\xi}\cap \da s \sqsubseteq \eta_s \text{ and } C_{t_\xi}\cap \da r= C_s\cap \da r $$ for $\displaystyle{r=s\uhp ht(\max_{<_T}(\eta_{t_\xi}\cap \da s))+1.}$ Recall that $$\eta_{t_\xi}=\{t_\xi\cap (\varepsilon +1):\varepsilon\in \nu_\delta\} \text{ and } \eta_s=\{s\cap (\varepsilon +1):\varepsilon\in \nu_{\max(s)}\}.$$

Note that $u\in \eta_{t_\xi}\cap \da s$ iff $u=t_\xi\cap (\varepsilon+1)$ for some $\varepsilon\in \nu_\delta\cap \max(s)$. Furthermore, $\nu_\delta\cap \max(s)=\nu_\delta\cap \varepsilon_{n-1}\sqsubseteq \nu_{\max(s)}$ by the choice of $s$ and condition (iv)-(c). Hence, as $s\sqsubseteq t_\xi$, we get that $\eta_{t_\xi}\cap \da s \sqsubseteq \eta_s$.

Finally, note that condition (\ref{cond})-(c) says that $C_s$ and $C_{t_\xi}$ agree below $s\cap (\varepsilon_{n-1}+1)$ and $\max(\eta_{t_\xi}\cap \da s)\leq (s\cap \varepsilon_{n-1}+1)$. This shows $C_{t_\xi}\cap \da r= C_s\cap \da r$.
\end{proof}

This finishes our main induction and, as transitivity and coherence are preserved at limit steps, we constructed a transitive and coherent ladder system $\ul C$ on the tree $T$. It is left to prove

\begin{claim} $Chr(X_{\ul C})>\omega$.
\end{claim}
\begin{proof} Fix a colouring $f:T\to \oo$; we will find $s,t\in T$ such that $f(s)=f(t)$ and $s\in C_t$. We fix an increasing sequence $(M_n:n\in\oo)$ of countable elementary submodels of $H(\omega_2)$  so that $S,\ul C,f \in M_n$ for all $n\in \oo$ and $\delta=M\cap \omg\in S$ for $M=\bigcup \{M_n:n\in \omega\}$.

We consider the construction of $\{C_t:t\in T_\delta\}$. There is a $\xi<\mf c$ so that $A^\xi_n=T\cap M_n$ for all $n\in \oo$ and $f_\xi=f\uhp (M\cap T)$. Our goal is to show that there is an $s\in C_{t_\xi}$ such that $f(s)=f(t_\xi)$; we let $l=f(t_\xi)$. Recall that there is an $x=x_\xi\in 2^\oo$ so that $t_\xi=\psi(x)$ and $C_{t_\xi}=\{\varphi(x\uhp k):k<\oo, x\uhp k\in \dom(\varphi)\}$; we will show that there is an $n<\oo$ such that $x\uhp n\in \dom(\varphi)$ and $f(\varphi(x\uhp n))=l$, which finishes the proof.

We first show that $C_{t_\xi}$ is infinite, equivalently that there are infinitely many $n\in \oo$ such that $x\uhp n\in \dom(\varphi)$. Suppose otherwise i.e. there is an $n_0<\oo$ such that $x\uhp n\notin \dom(\varphi)$ for all $n\in \oo\setm n_0$. Let $n$ be the minimal element of $\oo\setm n_0$ such that $l=l_n$.

Now, recall how we tried to construct $\varphi(x\uhp n)$: we looked at the set  $R^\xi_{x\uhp n}$ (elements from $A^\xi_n$ which satisfied conditions (a')-(d')) and if $R^\xi_{x\uhp n}$ was not empty then we put $x\uhp n\in \dom(\varphi)$ and chose $\varphi(x\uhp n)\in R^\xi_{x\uhp n}$. Let us show that  $R^\xi_{x\uhp n}$ is not empty. Let $$R=\{s\in T:s\geq \psi(x\uhp n), f(s)=l_n \text{ and } C_s=\{\varphi(x\uhp k):k< n, x \uhp k\in \dom(\varphi)\} \}
$$ and note that $R$ is in the model $M_n$ and $R\cap M_n\subseteq R^\xi_{x\uhp n}$. Hence, by elementarity, it suffices to show that $R\neq \emptyset$. However, this is clear as $t_\xi\in R$. This contradicts $x\uhp n\notin \dom(\varphi)$ and in turn shows that $C_{t_\xi}$ is infinite.

Now, with a quite similar argument, we will show that $x\uhp n\in \dom(\varphi)$ for the minimal $n\in \oo$ such that $l=l_n$; this finishes the proof of the claim as $s=\varphi(x\uhp n)\in C_{t_\xi}$ and $f(s)=l=f(t_\xi)$. Again, we look at $R^\xi_{x\uhp n}$ and prove that $R^\xi_{x\uhp n}\neq \emptyset$. Consider 
\begin{multline*}
R=\{s\in T\cap supp(\ul C):s\geq \psi(x\uhp n), f(s)=l_n, \\ 
\{\varphi(x\uhp k):k< n, x \uhp k\in \dom(\varphi)\}\cup\{s\} \text{  is complete},\\
 \nu_\delta\cap \varepsilon_{n-1}\sqsubseteq \nu_{\max(s)}  \text{ and }
 C_s\cap \da r= \{\varphi(x\uhp k):k< n, x\uhp k\in \dom(\varphi)\} \text{ for } r=s\cap (\vareps_{n-1}+1)\}.  
\end{multline*}
It is clear that $R\in M_n$ and $R\cap M_n\subseteq R^\xi_{x\uhp n}$ hence it suffices to show, by elementarity, that $R\neq \emptyset$. However, $t_\xi\in R$.

\end{proof}

This finishes the proof of the theorem.
\end{proof}

\section{A triangle free example}\label{tfreesec}

In this section, we adapt ideas of A. Hajnal and P. Komj\'ath \cite{half} to our setting and construct a ladder system $\ul C$ on $T=T(S)$ (with $S\subs \omg$ stationary) so that $X_{\ul C}$ is uncountably chromatic, triangle free and contains no copies of the graph $H_{\omega,\omega+2}$. A graph with these particular properties is constructed in \cite{half} but using the Continuum Hypothesis; this assumption is no longer used in our construction. 

\begin{definition} Suppose that $T$ is a tree. A cycle $x_0, x_1, .... x_n=x_0$ in $G(T)$ is \emph{special} if it is the union of two $<_T$-monotone paths. 
\end{definition}

Note that every triangle is a special cycle. Our aim is to construct a graph of the form $X_{\ul C}$ without special cycles.

\begin{definition} Suppose that $T$ is a tree of the form $T(S)$, $X$ is a subgraph of $G(T)$ and $\gamma<\omg$. We say that a  vertex $v\in T$ is \emph{$\gamma$-covered} in $X$ iff there exists a point $w\in T_{\leq \gamma}$ and a monotone path  from $w$ to $v$ in $X$.

A ladder system $\ul C$ on $T$ is \emph{sparse} iff  $s$ is not $\max(r)$-covered in $X_{\ul C}$ for each $t\in T$ and $r,s\in C_t$ with $r<s$. 
\end{definition}

Note that if $\ul C$ is sparse then $C_t$ is independent in $X_{\ul C}$ for all $t\in T$ and hence $X_{\ul C}$ is triangle free. The following was essentially proved in \cite{half} and motivates the definitions above:

\begin{lemma}\label{sparselemma} Suppose that $\ul C$ is a ladder system on $T$. Then 
\begin{enumerate}
\item if $\ul C$ is sparse then $X_{\ul C}$ contains no special cycles,
\item if $X_{\ul C}$ contains no special cycles then $X_{\ul C}$ contains no triangles or copies of $H_{\omega,\omega+2}$.
\end{enumerate}
\end{lemma}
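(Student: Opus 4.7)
For part (1), I would assume $X_{\ul C}$ contains a special cycle $\sigma=(x_0,\dots,x_n=x_0)$ and decompose it as two $<_T$-monotone paths meeting at a top vertex $t=x_i$ and a bottom $s=x_0=x_n$. Both $\sigma$-neighbors $x_{i-1},x_{i+1}$ of $t$ lie in $C_t\subseteq\da t$ and so are tree-comparable; WLOG $x_{i-1}<_T x_{i+1}$. Sparseness of $\ul C$ then gives that $x_{i+1}$ is not $\max(x_{i-1})$-covered in $X_{\ul C}$. Yet the reverse of the descending half, $s=x_n,x_{n-1},\dots,x_{i+1}$, is a $<_T$-monotone path from $s$ to $x_{i+1}$ in $X_{\ul C}$, and $s\leq_T x_{i-1}$ (since $s$ sits below $x_{i-1}$ on the ascending half), so $s\in T_{\leq \max(x_{i-1})}$; this directly witnesses the forbidden covering, a contradiction.

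For part (2), triangles are immediate: three pairwise adjacent vertices in $X_{\ul C}$ are pairwise tree-comparable, hence form a chain $a<_T b<_T c$, and the corresponding 3-cycle is the union of the monotone path $a,b,c$ with the single edge $\{a,c\}$, a special cycle. For the $H_{\oo,\oo+2}$ case, suppose a copy with vertex set $\{z,z'\}\cup\{x_i:i\in\oo\}\cup\{y_j:j\in\oo\}$ embeds. I would pigeonhole on the tree orientations of $z,z'$ relative to each $x_i$ and restrict to an infinite sub-index set with uniform orientations. In the mixed case $z<_T x_i<_T z'$ on the restriction, the edges $\{x_i,z'\}$ force $x_i\in C_{z'}\subseteq\da{z'}$, so the $x_i$'s form a chain; any $x_{i_1}<_T x_{i_2}$ then give the chain $z<x_{i_1}<x_{i_2}<z'$, and the 4-cycle $z,x_{i_1},z',x_{i_2}$ has pattern up-up-down-down, a special cycle. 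In the uniform cases $z,z'<_T x_i$ (or the dual with $>_T$), a Ramsey reduction on the $x_i$'s produces a chain or antichain; pigeonholing each $y_j$'s tree-position relative to the $x_i$-chain, whenever some $y_j$ lies tree-above some $x_i$ one obtains a special 4-cycle of the form $z,x_i,y_j,x_{i'}$ with pattern up-up-up-down or up-up-down-down.

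The principal obstacle is the degenerate residual sub-case in which every $y_j$ lies tree-below every $x_i$: here triangle-freeness forces the induced $H$-subgraph to be bipartite with both sides independent, so 4-cycles on $H$-edges alternate between the two sides and always contain two peaks, hence are never special. Closing this sub-case requires the finer Hajnal--Komj\'ath analysis, exploiting that $\{z,z',y_j:j\geq i\}\subseteq C_{x_i}$ for every $i$ together with the nested structure of the ladders $C_{x_i}$, to locate an auxiliary tree vertex whose interaction with some pair $x_i,x_{i'}$ produces a special cycle.
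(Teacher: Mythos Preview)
Your argument for part (1) is essentially identical to the paper's: both locate the unique $<_T$-maximal vertex $x_i$ of the cycle, observe $x_{i-1},x_{i+1}\in C_{x_i}$, and use the other monotone arc to exhibit the forbidden covering of the larger of the two.

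For part (2) there is a genuine gap, and the missing idea is much simpler than the ``finer Hajnal--Komj\'ath analysis'' you invoke. The key observation, which the paper uses and you never state, is that for any $s<_T t$ the set $C_t\cap \da s$ is \emph{finite}: $C_t$ is either finite or a cofinal $\omega$-sequence in $\da t$, and $s$ lies strictly below $t$. This immediately kills your degenerate residual sub-case: if $x_{i_0}<_T x_{i_1}$ and every $y_j$ (for $j\geq i_1$) satisfies $y_j<_T x_{i_0}$, then all these infinitely many distinct $y_j$ lie in the finite set $C_{x_{i_1}}\cap\da{x_{i_0}}$, a contradiction. No auxiliary special cycle is needed --- the sub-case simply cannot occur. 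You were one step away: you wrote down $\{z,z',y_j:j\geq i\}\subseteq C_{x_i}$, which is exactly the hypothesis that feeds into this finiteness bound.

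The same observation streamlines the whole argument and removes the need for your case split on $z,z'$ and the Ramsey step. Any two $x_i,x_j$ have infinitely many common neighbours (namely $y_k$ for $k\geq\max(i,j)$); if $x_i,x_j$ were $<_T$-incomparable, every such common neighbour would lie below $\Delta(x_i,x_j)$ and hence in the finite set $C_{x_i}\cap\da{\Delta(x_i,x_j)}$. So the $x_i$'s form a chain automatically, say $x_0<_T x_1<_T\cdots$ after reindexing. The paper then notes that one of $z,z'$ (say $z$) lies below infinitely many $x_i$, and the finiteness bound again forces some $y_j>_T x_1$; the $4$-cycle $z,x_0,y_j,x_1,z$ has pattern up--up--down--down and is special. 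Your longer route can be completed with the same observation, but the paper's proof is three lines once you have it.
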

\begin{proof} (1) Suppose that $x_0, x_1, \dots x_n=x_0$ is a special cycle with $\max(x_i)=\alpha_i$. Hence, there is $i< n$ so that $\alpha_j<\alpha_i$ if $i\neq j\leq n$. In particular, $x_{i-1}, x_{i+1}\in C_{x_i}$ and without loss of generality $\alpha_{i-1} < \alpha_{i+1} < \alpha_i$. However, this implies that $x_{i+1}$ is $\alpha_{i-1}$-covered, witnessed by the path $x_n, x_{n-1}, \dots x_{i+1}$, which contradicts that $\ul C$ is sparse.

(2) It is clear that every triangle is a special cycle. Now, suppose that $\{x_i,y_i, z,z':i<\oo\}$ is a subgraph of $X_{\ul C}$ isomorphic to $H_{\omega,\omega+2}$ i.e. the following pairs of points are edges $$\bigl \{\{x_i,y_j\},\{x_i,z\},\{x_i,z'\}:i\leq j\in \mathbb N\bigr \}.$$ First, as $x_i$ and $x_j$ has infinitely many common neighbors (for $i<j<\oo$) they must be $<_T$-comparable; hence we can suppose that $x_0<_T x_1<_T...$. Second, either $z$ or $z'$ is $<_T$-below infinitely many $x_i$ so we might as well suppose that $z<_T x_i$ for all $i<\oo$. Finally, we have $z<_T x_0<_T x_1$ and $x_0,x_1$ have infinitely many common neighbors of the form $y_j$ with $\max(y_j)>\alpha_1$. In particular, we can find special cycles of length  4 which contradicts our assumption.
\end{proof}

Hence, we will aim at constructing sparse ladder systems $\ul C$ such that the corresponding graphs $X_{\ul C}$ are uncountably chromatic. Before that, we need the following

\begin{lemma}\label{dec} Fix a stationary $S\subs \omg$ and let $T=T(S)$. Suppose that $X$ is a subgraph of $G(T)$ and $f:T\to \omega$. Then there is $\delta\in \omg$ and $t\in T_\delta$ so that for every $n\in \omega$ either:
 \begin{enumerate}
\item for every $r\geq t$ and every $\gamma\in\omg$ there is an $s\geq r$ with $f(s)=n$ which is \emph{not} $\gamma$-covered in $X$, or 
\item every $r\geq t$ with $f(r)=n$ is $\delta$-covered in $X$.
\end{enumerate}
\end{lemma}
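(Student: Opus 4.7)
The plan is to build the desired $t\in T_\delta$ as the union of an $\oo$-chain $t_0\leq t_1\leq\cdots$ constructed inside a countable elementary submodel $M\prec H(\oo_2)$ containing $T,X,f$ and satisfying $\delta:=M\cap\omg\in S$; such an $M$ exists by stationarity of $S$.

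The key observation driving the construction is that for each $n\in\oo$ the set
$$U_n:=\{s\in T:\text{condition (1) holds above }s\}$$
is upward closed in $T$: if $s\in U_n$ and $s'\geq s$, then every $r\geq s'$ also satisfies $r\geq s$, so the witness required by (1) still works. Consequently, the status ``$t_k\in U_n$'' only flips from false to true as the chain grows.

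Enumerate $M\cap\omg=\{\alpha_k:k<\oo\}$ and $\oo=\{n_k:k<\oo\}$, and build the chain by induction, maintaining $t_k\in T\cap M$ at every stage. If $t_k\in U_{n_k}$, pick any $t_{k+1}\in T\cap M$ with $t_{k+1}\geq t_k$ and $\max(t_{k+1})>\alpha_k$. Otherwise, since ``$t_k\notin U_{n_k}$'' is a statement about the parameter $t_k\in M$, elementarity yields $r\geq t_k$ and $\gamma<\omg$ both in $M$ such that every $s\geq r$ with $f(s)=n_k$ is $\gamma$-covered; then pick $t_{k+1}\in T\cap M$ with $t_{k+1}\geq r$ and $\max(t_{k+1})>\max(\alpha_k,\gamma)$. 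Such a $t_{k+1}$ exists in $H(\oo_2)$ (extend $r$ by any $\beta\in S$ above the required bound) and, by elementarity, inside $M$.

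Set $t=\bigcup_k t_k\cup\{\delta\}$. Since $\max(t_k)\in M\cap\omg$ forces $\max(t_k)<\delta$, while $\max(t_{k+1})>\alpha_k$ forces $\sup_k\max(t_k)\geq\sup_k\alpha_k=\delta$, we have that $t$ is a closed subset of $S$ with $\max(t)=\delta$, that is, $t\in T_\delta$. Now for each $n$ fix $k$ with $n_k=n$: either $t_k\in U_n$, and then $t\geq t_k$ gives $t\in U_n$ so (1) holds above $t$; or $t_k\notin U_n$, in which case any $s\geq t\geq t_{k+1}\geq r$ with $f(s)=n$ is $\gamma$-covered, and since $\gamma\in M\cap\omg$ forces $\gamma<\delta$ such an $s$ is also $\delta$-covered, giving (2). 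The main subtlety is simply ensuring that the inductive choices of $r,\gamma,t_{k+1}$ remain inside $M$, so that elementarity can be reinvoked at the next stage.
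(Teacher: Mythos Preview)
Your proof is correct and follows essentially the same approach as the paper: build an increasing chain inside a countable elementary submodel $M$ with $M\cap\omg=\delta\in S$, at each stage deciding one colour by either verifying that condition (1) already holds or, via elementarity, finding a witness $(r,\gamma)\in M$ to its failure and extending above $r$; then take the union with $\{\delta\}$. Your explicit isolation of the upward-closed sets $U_n$ is a nice touch that makes the ``once decided, stays decided'' point transparent, but the argument is the same as the paper's.
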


We will say that the vertex $t$ \emph{decides $f$}.

\begin{proof} Take a countable elementary submodel $M\prec H(\omega_2)$ with $f, X, S\in M$ so that $M\cap \omg=\delta\in S$. Fix a cofinal sequence $\{\delta_n:n\in\oo\}$ of type $\oo$ in $\delta$.

Now, construct a sequence $t_0\leq.... \leq t_n\leq ...$ in $M\cap T$ so that $\max(t_n)\geq \delta_n$ and for every $n\in \omega$ either
 \begin{enumerate}[(i)]
\item for every $r \geq {t_{n+1}}$ and every $\gamma\in\omg$ there is an $s\geq r$ with $f(s)=n$ which is \emph{not} $\gamma$-covered in $X$, or 
\item there is a $\gamma_n\in \delta$ so that every $r\geq {t_{n+1}}$ with $f(r)=n$ is $\gamma_n$-covered in $X$.
\end{enumerate}
We can pick $t_0\in M\cap T$ with $\max(t_0)\geq \delta_0$ arbitrarily. Given $t_n\in M$ we select $t'_{n+1}\in M$ above ${t_n}$ so that $\max(t'_{n+1})\geq \delta_{n+1}$. If the choice $t_{n+1}=t'_{n+1}$ satisfies (i) from above then we are done; otherwise, there is $t_{n+1}\geq t_{n+1}'$ and $\gamma=\gamma_n$ so that every $r\geq {t_{n+1}}$ with $f(r)=n$ is $\gamma_n$-covered. $t_{n+1}$ and $\gamma_n$ can be chosen in $M$ by elementarity so we are done.

Now, let $t=\bigcup\{t_n:n\in\omega\}\cup\{\delta\}$ and note that $t\in T_\delta$; we claim that this $t$ decides $f$. Indeed, as $t\geq t_n$ for all $n\in \omega$ and $t_n$ satisfies (i) or (ii), $t$ must satisfy either $1.$ or $2.$ respectively.

\end{proof}

We are ready to prove the main result of this section:

\begin{theorem} \label{trianglethm} Fix a stationary $S\subseteq \omg$ and let $T=T(S)$. Then there is subgraph $X$ of $G(T)$ with $Chr(X)=\omg$ such that $X$ contains no special cycles; in particular, $X$ contains no triangles or copies of $H_{\oo,\oo+2}$.
\end{theorem}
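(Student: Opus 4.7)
By Lemma~\ref{sparselemma}, it is enough to construct a sparse ladder system $\ul C$ on $T=T(S)$ with $\chr(X_{\ul C})>\oo$: sparsity then yields no special cycles in $X_{\ul C}$, and hence no triangles and no copy of $H_{\oo,\oo+2}$, while $\chr(X_{\ul C})\le \chr(G(T))=\omg$ is automatic from the height of $T$. The construction would follow the inductive template of Theorem~\ref{mainthm}: by induction on $\delta\in S$, at stage $\delta$ we enumerate the pairs $\{(A_\xi,f_\xi):\xi<\mf c\}$ with $A_\xi\in[T_{<\delta}]^\oo$ branching--dense (condition $(\star)$ of Theorem~\ref{mainthm}) and $f_\xi:A_\xi\to\oo$, and for each such $\xi$ build auxiliary maps $\psi,\varphi:2^{<\oo}\to A_\xi$, placing a fresh $t_\xi=\psi(x_\xi)\in T_\delta$ together with $C_{t_\xi}=\{\varphi(x_\xi\uhp k):k<\oo,\ x_\xi\uhp k\in \dom(\varphi)\}$.

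The essential modification is that when selecting $\varphi(x)=s$ for $x\in 2^n$ I would additionally require that $s$ be \emph{not $\max(\psi(x))$-covered} in the ladder system built so far. Because no edge introduced at stage $\delta$ can terminate at a vertex of height $<\delta$ (monotone paths are chains), the covering status of any $s\in A_\xi\subseteq T_{<\delta}$ is already stabilized by $\ul C\uhp T_{<\delta}$, so this condition is coherent to enforce inductively. Sparsity of $C_{t_\xi}=\{s_0<s_1<\cdots\}$ then follows: the order relation $\varphi(x_\xi\uhp k)\le\psi(x_\xi\uhp(k+1))$ gives $\max(s_k)\le\max(\psi(x_\xi\uhp(k+1)))$, and $s_{k+1}=\varphi(x_\xi\uhp(k+1))$ was chosen not $\max(\psi(x_\xi\uhp(k+1)))$-covered, hence a fortiori not $\max(s_k)$-covered.

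To prove $\chr(X_{\ul C})>\oo$, I would fix $f:T\to\oo$, take a countable elementary $M\prec H(\oo_2)$ with $f,\ul C,S\in M$ and $\delta=M\cap\omg\in S$, and note $(M\cap T,f\uhp (M\cap T))=(A_\xi,f_\xi)$ for some $\xi$. Setting $n=f(t_\xi)$, the aim is to show $x_\xi\uhp n\in\dom(\varphi)$: then $s=\varphi(x_\xi\uhp n)\in C_{t_\xi}$ has $f(s)=n=f(t_\xi)$, producing a monochromatic edge. This reduces to showing that
$$
R=\bigl\{s\in T:s\ge\psi(x_\xi\uhp n),\ f(s)=n,\ s\text{ not }\max(\psi(x_\xi\uhp n))\text{-covered in }X_{\ul C}\bigr\}
$$
is nonempty, since all parameters defining $R$ lie in $M$, so $R\in M$ and $R\cap M$ is precisely the set $R^\xi_{x_\xi\uhp n}$ the construction consulted when deciding whether to put $x_\xi\uhp n$ into $\dom(\varphi)$; elementarity then delivers the needed witness.

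The main obstacle, and the reason Lemma~\ref{dec} was developed, is to verify $R\ne\empt$. I would apply Lemma~\ref{dec} to $X=X_{\ul C}$ and $f$ using the same submodel $M$, arranging the deciding chain to start at $\psi(x_\xi\uhp n)$ so that the resulting deciding vertex $\tau\in T_\delta$ satisfies $\tau\ge\psi(x_\xi\uhp n)$. Alternative (1) of the dichotomy applied to the color $n$, with $r=\tau$ and $\gamma=\max(\psi(x_\xi\uhp n))$, immediately yields an $n$-colored $s\ge\tau\ge\psi(x_\xi\uhp n)$ that is not $\max(\psi(x_\xi\uhp n))$-covered, hence an element of $R$. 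Alternative (2) is the delicate case: it forces every $n$-colored vertex above $\tau$ to be $\delta$-covered (indeed from some $\gamma<\delta$, by the proof of Lemma~\ref{dec}), and one then has to exploit the sparse structure of $\ul C$---which severely restricts what monotone paths into a level-$\delta$ vertex can look like, since the final edge of any such path must lie in the constructed ladder---to either contradict the hypothetical properness of $f$ on $X_{\ul C}$ or to read off a monochromatic edge directly from the forced covering path. Coordinating the two level-$\delta$ vertices $\tau$ and $t_\xi$ (ideally arranging that $\tau=t_\xi$ by tuning the chain) and running this last case analysis to completion is the step I expect to require the most care.
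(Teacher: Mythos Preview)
Your sparsity modification---choosing $\varphi(x)$ to be not $\max(\psi(x))$-covered---is exactly right, and your verification that this makes $C_{t_\xi}$ sparse is correct. The gap is in the chromatic-number argument, specifically your treatment of alternative~(2) of Lemma~\ref{dec}.

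You propose to apply Lemma~\ref{dec} \emph{after} $t_\xi$ has been built, producing a deciding vertex $\tau\in T_\delta$ above $\psi(x_\xi\uhp n)$. But $\tau$ and $t_\xi$ are then two vertices at the same level $\delta$, so they are either equal or incomparable; there is no mechanism to force $\tau=t_\xi$, since the chain in Lemma~\ref{dec} is driven by the decidability of $f$ while $t_\xi=\psi(x_\xi)$ is driven by the choices of $\varphi$. In alternative~(2) you only learn that $n$-coloured vertices \emph{above $\tau$} are covered from below $\delta$, and $t_\xi$ need not be above $\tau$---so you cannot conclude that $t_\xi$ is covered, and the vague appeal to ``the sparse structure of $\ul C$'' has nothing to bite on. Your own closing sentence acknowledges this coordination problem but does not solve it.

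The paper's fix is to pre-select the deciding vertex and bake it into the construction: enumerate \emph{triples} $(A_\xi,f_\xi,t_{0\xi})$ rather than pairs, and require $\psi(\emptyset)>t_{0\xi}$. In the chromatic-number proof one first uses elementarity (the statement ``some $t$ decides $f$'' holds in $H(\omega_2)$ by Lemma~\ref{dec}, hence some $t_0\in M$ genuinely decides $f$) and then takes $\xi$ with $(A_\xi,f_\xi,t_{0\xi})=(M\cap T,\,f\uhp M,\,t_0)$. Now $t_\xi>t_0$ automatically, so if alternative~(2) held for colour $n=f(t_\xi)$ at $t_0$, then $t_\xi$ itself would be $\max(t_0)$-covered; the last edge of the witnessing monotone path is $\{s,t_\xi\}$ with $s\in C_{t_\xi}$, whence $s$ is $\max(t_0)$-covered. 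But every $s=\varphi(x_\xi\uhp k)\in C_{t_\xi}$ was chosen not $\max(\psi(x_\xi\uhp k))$-covered, and $\max(\psi(x_\xi\uhp k))\ge\max(\psi(\emptyset))>\max(t_0)$, so $s$ is not $\max(t_0)$-covered---contradiction. This rules out alternative~(2) cleanly and leaves alternative~(1), which immediately gives $R\ne\emptyset$. The missing idea, then, is that the deciding vertex must be fed into the ladder construction \emph{before} $t_\xi$ is built, not located afterwards.
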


\begin{proof} 
It suffices to construct a sparse ladder system $\ul C$ on $T$ so that $Chr(X_{\ul C})=\omg$; indeed, by Lemma \ref{sparselemma}, a sparse ladder system $\ul C$ induces a graph $X_{\ul C}$ on $T$ with no special cycles and hence no triangles or copies of $H_{\oo,\oo+2}$.

We define a sparse ladder system $(C_t:t\in T_{<\delta})$ by induction on $\delta\in S'$ and so that $C_t=\emptyset$ if $t\in T$ is a successor. Suppose we constructed $C_t$ for $t\in T_{<\delta}$ and we extend this ladder system to $T_{<\delta^+}$ where $\delta^+$ is the minimum of $S'\setm (\delta+1)$ in two steps. First, we define $C_t$ for $t\in T_\delta$ and then let $C_t=\emptyset$ for $t\in T_{<\delta^+}\setm T_{\leq \delta}$. We may suppose $\delta\in S$, otherwise $T_\delta=\emptyset$.

Let $\{(A_\xi,f_\xi,t_{0\xi}):\xi<\mf c\}$ denote a 1-1 enumeration of all triples $(A,f,t_0)$ with  $A\in [T_{<\delta}]^{\oo}$, $f:A\to \oo$ and $t_0\in A$ so that 
 \begin{enumerate}[$(\star)$]
\item for every $t\in A$ and $\vareps<\delta$ there are incomparable $s^0,s^1\in A$ so that $s^i\geq t$ and $\max(s^i)\geq \vareps$ for $i<2$, and
\end{enumerate}
By induction on $\xi<\mf c$ we define $t_\xi\in T_\delta\setm \{t_\zeta:\zeta<\xi\}$ and $C_{t_\xi}\subseteq \da t_\xi$ (while preserving that the ladder system is sparse). Suppose we have $\{t_\zeta:\zeta<\xi\}$ defined and consider the triple $(A_\xi,f_\xi,t_{0\xi})$. Fix a cofinal increasing sequence $\{\delta_n:n\in \omega\}$ of type $\oo$ in $\delta$.

We define a map $\psi:2^{<\omega}\to A_\xi$  and a partial map $\varphi:2^{<\omega}\to A_\xi$ so that 
\begin{enumerate}[(i)]
\item $\psi$ and $\varphi$ are order preserving injections and $$t_{0\xi}<\psi(x)\leq \varphi(x)\leq \psi(x\smf i)$$ for $i<2$ provided that $x\in \dom(\varphi)$,

\item \label{branchcond3} $\psi(x\smf 0)$ and $\psi(x\smf 1)$ are incomparable and contained in $A_\xi\setm T_{<\delta_n}$,
\item\label{sparsecond} $\varphi(x)$ is not $\max(\psi(x))$-covered,

\item  \underline{if} there is an $s\in A_\xi$ such that 
\begin{enumerate}
	\item $s\geq \psi(x),f_\xi(s)=n$,
	\item $s$ is not $\max(\psi(x))$-covered
\end{enumerate} \underline{then} $x\in \dom(\varphi)$ and $\varphi(x)$ satisfies (a)-(b) as well
\end{enumerate}
for all $x\in 2^n$ and $n\in \oo$.

We define $\psi(x)$ and $\varphi(x)$ for $x\in 2^n$ by induction on $n\in \omega$. We select $\psi(\emptyset)>t_{0\xi}$ arbitrarily in $A_\xi$. Given $\psi(x)$ for $x\in 2^n$ we look at the set $$R^\xi_x=\{s\in A_\xi:s\geq \psi(x),f_\xi(s)=n \text{ and } s \text{ is not } \max(\psi(x))\text{-covered}\}.$$

If $R^\xi_x$ is not empty then let $x\in \dom(\varphi)$ and pick any $\varphi(x)\in R^\xi_x$; otherwise $x\notin \dom(\varphi)$. Now, using condition ($\star$) of $A_\xi$, select incomparable $\psi(x\smf 0)$ and $\psi(x\smf 1)$ so that conditions (i)-(ii) are satisfied. This finishes the construction of $\psi$ and $\varphi$.

Now extend $\psi$ to $2^\omega$ in the obvious way:

$$\psi(x)=\bigcup\{\psi(x\uhp k):k<\oo\}\cup \{\delta\}$$

for $x\in 2^\oo$; note that $\psi(x)$ is a closed subset of $S$  by the second part of condition (\ref{branchcond3}) and hence $\psi(x) \in T_{\delta}$ for all $x\in 2^\oo$. Also, $\psi$ remains 1-1 on $2^\oo$ by the first part of condition (\ref{branchcond3}). Hence, we can find an $x_\xi \in 2^\omega$ such that $\psi(x_\xi)\in T_{\delta}\setm \{t_\zeta:\zeta<\xi\}$ and we let $t_\xi=\psi(x_\xi)$. Finally, let $$C_{t_\xi}=\bigl \{\varphi(x_\xi\uhp k):k<\oo, x_\xi\uhp k\in \dom(\varphi)\bigr \}.$$ Note that condition (\ref{sparsecond}) ensures that $C_{t_\xi}$ is sparse. This finishes the induction on $\xi<\mf c$ and in turn the induction on $\delta\in Lim(\omg)$.

We are left to prove 

\begin{claim}$Chr(X_{\ul C})>\omega$.
\end{claim}
\begin{proof} Fix a colouring $f:T\to \omega$; we will find $s,t\in T$ so that $f(s)=f(t)$ and $s\in C_t$. Take a countable elementary submodel $M\prec H(\omega_2)$ so that $S,\ul C, f\in M$ and $\delta=M\cap \omg\in S$. By Lemma \ref{dec}, we can find $t_0\in M\cap T$ so that $t_0$ decides $f$.

Now, consider the construction of $\{C_t:t\in T_\delta\}$; note that there is a $\xi<\mf c$ so that $(A_\xi,f_\xi,t_{0\xi})=(A, f\uhp A,t_0)$ where $A=T\cap M$. We will show that there is $s\in C_{t_\xi}$ with $f(s)=f(t_\xi)$. 

Let $f(t_\xi)=n$.

\begin{obs}\label{largecolour} For every $r\geq t_0$ and every $\gamma\in\omg$ there is an $s\geq r$ with $f(s)=n$ which is \emph{not} $\gamma$-covered . 
\end{obs}
\begin{proof} Recall that $t_0=t_{0\xi}$ decides $f$, so if the above statement fails then every $r\geq t_0$ with $f(r)=n$ is $\max(t_0)$-covered. In particular, $t_\xi$ is $\max(t_0)$-covered. However, this implies that $C_{t_\xi}$ is not empty and there is $s\in C_{t_\xi}$ which is $\max(t_0)$-covered (note that $s>t_0$ for all $s\in C_{t_\xi}$). However, every $s\in C_{t_\xi}$ is not $\max(t_{0\xi})$-covered by conditions (i) and (iii) above; this contradiction finishes the proof. 
\end{proof}

Recall that there is an $x\in 2^\oo$ such that $t_\xi=\psi(x)$ and $C_{t_\xi}=\{\varphi(x\uhp k):k<\oo, x\uhp k\in \dom(\varphi)\}$. Our aim is to show that $x\uhp n\in \dom(\varphi)$ and hence $f(s)=f(t_\xi)$ for $s=\varphi(x\uhp n)\in C_{t_\xi}$. Thus we need to prove that 
$$R^\xi_{x\uhp n}=\{s\in A_\xi:s\geq \psi(x\uhp n),f_\xi(s)=n \text{ and } s \text{ is not } \max(\psi(x\uhp n))\text{-covered}\}$$ is not empty.

Let $$R=\{s\in T:s\geq \psi(x\uhp n),f(s)=n \text{ and } s \text{ is not } \max(\psi(x\uhp n))\text{-covered}\}$$ and note that $R^\xi_{x\uhp n}=R\cap M$ and $R\in M$.  Hence, by elementarity, it suffices to show that $R\neq \emptyset$. This clearly follows from Observation \ref{largecolour} applied to $r=\psi(x\uhp n)$ and $\gamma=\max(\psi(x\uhp n))$.

\end{proof}
This finishes the proof of the theorem.
\end{proof}

Let us remark that sparse and transitive ladder systems represent two extremes in the spectrum of subgraphs of $G(T)$; if $\ul C$ is sparse then $C_t$ is an independent set while if $\ul C$ is transitive then $C_t$ is a complete subgraph.

\section{Remarks}\label{remarkssec}

We would like to point out that some of the graphs defined in our paper satisfy strong \emph{partition properties}. If $T$ is a tree and $X$ is a subgraph of $G(T)$ then we write  $$X\to (K_{\omega+1})^1_\omega$$ iff for every colouring $f:T\to \oo$ there is an $n\in \oo$ and a set $A\subseteq T\cap f^{-1}(n)$ of $<_T$-order type $\oo+1$ such that $A$ spans a complete graph (i.e. $A$ is a monochromatic copy of $K_{\oo+1})$. Clearly, $X\to (K_{\omega+1})^1_\omega$ implies that $Chr(X)>\oo$ but not necessarily the other way; indeed, as seen in Theorem \ref{trianglethm}, there are even triangle free subgraphs of $G(T)$ (for some $T$) which are uncountably chromatic.

Let us first show that satisfying the above partition property or having large chromatic number are equivalent for \emph{transitive} ladder systems. 

\begin{prop}Suppose that $T$ is a tree and $\ul C$ is a ladder system on $T$. If $\ul C$ is transitive and $\chr(X_{\ul C})>\oo$ then $$X_{\ul C}\to (K_{\omega+1})^1_\omega.$$
\end{prop}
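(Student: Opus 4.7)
I would prove the contrapositive: given any $f:T\to\oo$ for which $X_{\ul C}$ contains no monochromatic $K_{\oo+1}$, I would construct a proper $\oo$-colouring of $X_{\ul C}$.

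The engine of the argument is transitivity. Since $\ul C$ is transitive, $C_t$ spans a complete subgraph of $X_{\ul C}$, and as every $s\in C_t$ is joined to $t$, the set $C_t\cup\{t\}$ is itself complete. In particular, whenever $|C_t|=\oo$ this is a copy of $K_{\oo+1}$ whose bottom $\oo$ vertices form a chain of order type $\oo$ cofinal in $t^\downarrow$. Consequently, if $X_{\ul C}$ has no monochromatic $K_{\oo+1}$ with respect to $f$, then
$$F_t:=C_t\cap f^{-1}(f(t))$$
is finite for every $t\in T$; otherwise any countably infinite subset of $F_t$ together with $t$ would yield a monochromatic $K_{\oo+1}$.

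Next I would fix an arbitrary well-ordering $\prec$ of $T$ extending the tree order $<_T$ (for instance, well-order each level $T_\alpha$ and concatenate by height). For any $t\in T$, the $\prec$-predecessors of $t$ that are $X_{\ul C}$-neighbours of $t$ are precisely the elements of $C_t$: a neighbour $s$ with $t\in C_s$ would satisfy $t<_T s$ and hence $t\prec s$, so such an $s$ is not a $\prec$-predecessor of $t$; conversely every $s\in C_t$ satisfies $s<_T t$ and thus $s\prec t$. Restricting to a single colour class $f^{-1}(n)$, each $t\in f^{-1}(n)$ therefore has only the finitely many predecessor neighbours $F_t$ inside that class. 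A straightforward greedy recursion along $\prec$ then produces a proper $\oo$-colouring $g_n$ of the induced subgraph $X_{\ul C}\uhp f^{-1}(n)$, and the map $t\mapsto(f(t),g_{f(t)}(t))$ is a proper colouring of $X_{\ul C}$ using $\oo\cdot\oo=\oo$ colours, contradicting $\chr(X_{\ul C})>\oo$.

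I do not expect any real obstacle: the conceptual content sits entirely in the first observation that transitivity forces $F_t$ to be finite, after which the greedy step is routine. The only point that merits care is that every edge of $X_{\ul C}$ indeed places one endpoint in the $C$-set of the other, which is simply the definition of $X_{\ul C}$.
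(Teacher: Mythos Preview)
Your argument is correct and is essentially the paper's own proof: both observe that transitivity makes $C_t\cup\{t\}$ complete so that $F_t=C_t\cap f^{-1}(f(t))$ must be finite, and then refine $f$ to a proper colouring by a greedy recursion over the tree order (the paper writes this recursion explicitly as $g_1(t)=\max\{g_1(s):s\in F_t\}+1$, which is precisely your greedy step). The only cosmetic difference is that you phrase the recursion along an arbitrary well-ordering extending $<_T$, while the paper phrases it as induction on height; these are equivalent here since the predecessor neighbours of $t$ all lie strictly below $t$.
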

\begin{proof} 
Fix an $f:T\to \omega$; we will show that there is an $n\in\omega$ and $t\in f^{-1}(n)$ so that $A=C_t\cap f^{-1}(n)$ is infinite hence, by transitivity, $A\cup \{t\}$ gives a monochromatic copy of $K_{\oo+1}$ in $X_{\ul C}$. 

Suppose otherwise i.e. $C_t\cap f^{-1}(n)$ is finite for every $t\in T$ with $f(t)=n$. We can define a new colouring $g:T\to \oo\times \oo$ using induction on the height so that $g(t)=(f(t),g_1(t))$ where $g_1(t)=\max\{g_1(s):s\in C_t\cap f^{-1}(n)\}+1$ with $n=f(t)$. It is easy to see that $g$ witnesses $Chr(X_{\ul C})\leq \oo$ which is a contradiction.
\end{proof}

The above proposition is nicely complemented by 

\begin{obs}If $T$ is a tree of height $\omg$ and $\ul C$ is a ladder system on $T$ then there is no complete subgraph in $X_{\ul C}$ of $<_T$-order type $\omega+2$.
\end{obs}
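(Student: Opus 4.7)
The plan is to argue by contradiction: assume $A \subseteq T$ spans a complete subgraph in $X_{\ul C}$ and has $<_T$-order type $\omega+2$, say $A = \{a_\alpha : \alpha < \omega+2\}$ with $a_0 <_T a_1 <_T \cdots <_T a_\omega <_T a_{\omega+1}$. The first observation is that edges in $X_{\ul C}$ only connect comparable elements, namely pairs $\{s,t\}$ with $s \in C_t \subseteq t^\downarrow$. So completeness of $A$ forces $a_\alpha \in C_{a_\beta}$ for every $\alpha < \beta < \omega+2$.

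Applying this with $\beta = \omega+1$, I get that $\{a_\alpha : \alpha < \omega+1\} \subseteq C_{a_{\omega+1}}$. But the left-hand side is a $<_T$-increasing chain of order type $\omega+1$, whereas by definition of a ladder system $C_{a_{\omega+1}}$ is either finite or a cofinal sequence of type $\omega$ in $a_{\omega+1}^\downarrow$. In either case, $C_{a_{\omega+1}}$ is well-ordered by $<_T$ of order type at most $\omega$, and so cannot contain a chain of order type $\omega+1$. This contradiction completes the argument.

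There is essentially no obstacle here — the whole content of the observation is that $C_t$ has order type at most $\omega$ under $<_T$, so any complete subgraph contained as a chain ending at $t$ has order type at most $\omega+1$; adding one more element on top costs another $\omega$ worth of ladder entries at that top level, which is impossible. The hypothesis that $\operatorname{ht}(T) = \omega_1$ plays no role in the proof itself and is only present for coherence with the rest of the paper.
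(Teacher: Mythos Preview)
Your argument is correct and is exactly the natural one: completeness forces $\{a_\alpha:\alpha\leq\omega\}\subseteq C_{a_{\omega+1}}$, which is impossible since a ladder set has $<_T$-order type at most $\omega$. The paper states this observation without proof, so there is nothing to compare against; your write-up supplies precisely the intended one-line justification.
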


Recall that in the proof of Theorem \ref{mainthm2}, we used a ladder system on $\omg$ (denoted by $\ul \nu$ there) to define another ladder system (denoted by $\ul \eta$) on $T=T(S)$ for $S\subseteq \omg$ in a very natural way. We did not consider the subgraph of $G(T)$ corresponding to $\ul \eta$ at that point so let us present a result here.

\begin{prop}Suppose that $S\subseteq \omg$ is stationary and let $T=T(S)$. Fix a ladder system $\ul \nu=\{\nu_\delta:\delta\in Lim(\omg)\}$ on $\omg$. Let $$C_t=\{t\cap (\vareps+1) :\vareps\in \nu_\delta\}$$ for any limit $t\in T_\delta$ and $\delta\in S$ and let $C_t=\emptyset$ otherwise. Then $$X_{\ul C}\to (K_{\omega+1})^1_\omega.$$
\end{prop}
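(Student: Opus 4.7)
The plan is to adapt the $\psi$--$\varphi$ technique from Theorems \ref{mainthm} and \ref{mainthm2} to the present setting, where the ladder $\ul C$ is externally prescribed by $\ul \nu$ rather than constructed freely. Fix $f \colon T \to \oo$; take a countable $M \prec H(\omega_2)$ with $S, \ul \nu, f \in M$ and $\delta := M \cap \omg \in S$, a strictly increasing $(\delta_n)_{n < \oo}$ in $M$ cofinal in $\delta$, and an enumeration $(\ell_n)_{n < \oo}$ of $\oo$ in which every value appears infinitely often.

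I would build by induction on $|y|$ maps $\psi \colon 2^{<\oo} \to T \cap M$ (order preserving, with $\psi(y \smf 0)$ and $\psi(y \smf 1)$ incomparable and $\max \psi(y \smf i) > \delta_{|y|+1}$) and a partial $\varphi \colon 2^{<\oo} \to T \cap M$ satisfying $\psi(y) \leq \varphi(y) \leq \psi(y \smf i)$, subject to two features tailored to the ladder $\ul C$. \emph{(Gap property.)} Whenever $\varphi(y)$ is defined, set $\gamma^y := \min\{\vareps \in \nu_\delta : \vareps > \max \varphi(y)\}$ and require that every element of $\psi(y \smf i) \setm \varphi(y)$ be strictly above $\gamma^y$. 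Although $\nu_\delta \notin M$, the ordinal $\gamma^y$ lies below $\delta$ and hence in $M$, so the condition is expressible with parameters from $M$ and is witnessed in $V$---and therefore also in $M$ by elementarity---by taking $\psi(y \smf i) := \varphi(y) \cup \{\beta_i\}$ for any distinct $\beta_0, \beta_1 \in S \cap (\max(\gamma^y, \delta_{|y|+1}), \omg)$. \emph{(Clique recipe.)} Declare $y \in \dom \varphi$ iff there is $s \in T \cap M$ extending $\psi(y)$ with $s$ a limit of $T$, $f(s) = \ell_{|y|}$, and $\varphi(y \uhp k) \in C_s$ for every $k < |y|$ with $y \uhp k \in \dom \varphi$; if so, let $\varphi(y)$ be such an $s$. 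Let $R^V_y$ denote the analogous set of witnesses in $V$; since all its defining parameters lie in $M$, elementarity yields $R^V_y \neq \empt \iff R^V_y \cap M \neq \empt$.

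Extend $\psi$ to $2^\oo$ by $\psi(x) := \bigcup_{k < \oo} \psi(x \uhp k)$ and set $t_x := \psi(x) \cup \{\delta\} \in T_\delta$. Fix any $x \in 2^\oo$ and put $n^* := f(t_x)$. The central claim is that for every $n$ with $\ell_n = n^*$ one has $x \uhp n \in \dom \varphi$, witnessed by $t_x$ itself. Indeed $t_x$ extends $\psi(x \uhp n)$, is a limit in $T$, and has $f(t_x) = \ell_n$; and for each $k < n$ with $x \uhp k \in \dom \varphi$ the gap property guarantees $t_x \cap (\max \varphi(x \uhp k), \gamma^{x \uhp k}] = \empt$, so that $\varphi(x \uhp k) = t_x \cap (\gamma^{x \uhp k} + 1)$ lies in $C_{t_x}$ because $\gamma^{x \uhp k} \in \nu_\delta$. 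Thus $t_x \in R^V_{x \uhp n}$, and elementarity promotes this to a witness in $M$ which becomes $\varphi(x \uhp n)$.

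Finally, $A := \{\varphi(x \uhp n) : x \uhp n \in \dom \varphi,\ \ell_n = n^*\}$ is an infinite $<_T$-chain below $t_x$ inside $f^{-1}(n^*)$; each $\varphi(x \uhp n)$ belongs to $C_{t_x}$ by the previous paragraph, and for $k < l$ with both in $\dom \varphi$ the clique recipe used to define $\varphi(x \uhp l)$ delivers $\varphi(x \uhp k) \in C_{\varphi(x \uhp l)}$. Hence $A \cup \{t_x\}$ is a monochromatic copy of $K_{\oo+1}$ in $X_{\ul C}$, as required. The main obstacle I anticipate is arranging the gap property and the clique recipe simultaneously inside $M$; but the former only uses the ordinal $\gamma^y < \delta$ (which lies in $M$ even though $\nu_\delta$ does not), while the latter is handled by elementarity exactly as in the previous theorems.
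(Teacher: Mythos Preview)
Your proof is correct and follows essentially the same strategy as the paper's: use an elementary submodel $M$ with $\delta=M\cap\omg\in S$, build a chain inside $M$ whose successive gaps are controlled by the next element of $\nu_\delta$ (your ``gap property'' is exactly the paper's condition $t_{n+1}\cap(\vareps_n+1)=t_n$), impose a conditional ``clique recipe'' that asks the new point to have colour $\ell_n$ and to contain the earlier chosen points in its ladder, and then verify via elementarity---with the limit point $t_x\in T_\delta$ as the external witness---that the recipe succeeds whenever $\ell_n=f(t_x)$.

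The only noteworthy difference is packaging. You carry over the full $\psi/\varphi$ binary-tree apparatus from Theorems~\ref{mainthm} and~\ref{mainthm2}, but here that branching is inert: you pick a single $x\in 2^\oo$, never use the incomparability of $\psi(y\smf 0)$ and $\psi(y\smf 1)$, and need only one element of $T_\delta$ rather than continuum many. The paper accordingly works along a single chain $t_0\le t_1\le\dots$, which makes the argument shorter. Conversely, the paper first isolates a node $t_0$ above which every colour is either dense or absent (a miniature version of Lemma~\ref{dec}); your version shows this step is dispensable, since enumerating \emph{all} colours and letting $\varphi$ remain undefined when no witness exists already suffices. So each presentation carries a small piece of unnecessary scaffolding, but the core mechanism---gap property to land in $C_{t_x}$ via $\nu_\delta$, clique recipe reflected by elementarity---is identical.
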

\begin{proof} Let $f:T\to \omega$. We say that $D\subseteq T$ is \emph{dense above} $t\in T$ iff for every $s\geq t$ there is $r\in D$ such that $r\geq s$. $D$ is \emph{empty above} $t$ iff $s\notin D$ for every $s\geq t$.

\begin{claim}
There is $t_0$ in $T$ so that either $f^{-1}(k)$ is empty or dense above $t_0$ for every $k\in \omega$ .
\end{claim}
\begin{proof} The proof is very similar to the argument seen in Lemma \ref{dec} so we will be brief here. Take a countable elementary submodel  $M\prec H(\omega_2)$ with $f,S\in M$ so that $\delta=M\cap \omg\in S$. Build a sequence $s_0\leq s_1\leq \dots$ in $T\cap M$ so that $(\max(s_k):k\in \oo)$ is a cofinal $\oo$-type sequence in $\delta$ and $f^{-1}(k)$ is either empty or dense above $s_k$. It is easy to see that $$t_0=\bigcup\{s_k:k\in\oo\}\cup\{\delta\}$$ is in $T$ and satisfies the claim.
\end{proof}

Take a countable elementary submodel $M\prec H(\omega_2)$ with $t_0,f,S, \ul \nu\in M$ and $\delta=M\cap \omg\in S$.  Let $\{k_n:n\in\omega\}$ enumerate those $k\in \omega$ so that $f^{-1}(k)$ is dense above $t_0$ (or equivalently, not empty above $t_0$), each $\omega$ times. Let $(\delta_n:n\in\oo)$ be an arbitrary cofinal $\oo$-type sequence in $\delta$.

We construct $t_0\leq t_1\leq ....\leq t_n \leq ...$ in $T\cap M$ so that 

\begin{enumerate}
\item $\max(t_{n+1})\geq \delta_n$,
\item $t_{n+1}\cap (\vareps_n+1)=t_n$ for $\vareps_n=\min \nu_\delta\setm (\max(t_n)+1)$,
\item \ul{if} there is $t\geq t_n$ in $T\cap M$ such that 
\begin{enumerate}
	\item $\max(t)\geq \delta_n$, $t\cap (\vareps_n+1)=t_n$,
	\item $\{t_i:i\leq n\}\subseteq C_t$ and
	\item $f(t)=k_n$ 
\end{enumerate} 
 \ul{then} $t=t_{n+1}$ satisfies (a)-(c) as well.
\end{enumerate}
Let $t=\bigcup\{t_n:n\in\omega\}\cup \{\delta\}$; note that $t\in T_\delta$ and $t\geq t_n$ for all $n\in \omega$. Also, (2) ensures that $t_n\in C_t$ for $n\in \oo$ as $t_n=t\cap (\vareps_n+1)$ and $\vareps_n\in\nu_\delta$.

Let $k=f(t)$; as $t\geq t_0$ we know that $f^{-1}(k)$ is dense above $t_0$. We claim that $$A=\{t_{n+1}:k=k_n, n\in \omega\}\cup \{t\}$$ is a complete subgraph $X_{\ul C}$ and $A$ is coloured with $k$ which finishes the proof. 
It suffice to show that whenever $k_n=k$ and $t_{n+1}$ is constructed then condition (3) is satisfied. Fix an $n\in \oo$ such that $k_n=k$. Consider the set $$R_n=\{t\in T\cap M:\max(t)\geq \delta_n, t\cap (\vareps_n+1)=t_n,
	\{t_i:i\leq n\}\subseteq C_t \text{ and } f(t)=k_n \}$$ and we wish to show that $R_n\neq \emptyset$.
	
	It suffices to show that $$R=\{t\in T:\max(t)\geq \delta_n, t\cap (\vareps_n+1)=t_n,
	\{t_i:i\leq n\}\subseteq C_t \text{ and } f(t)=k_n \}$$ is not empty as $R_n=R\cap M$ and $R\in M$. However $t\in R$ which finishes the proof.

\end{proof}

Finally, we mention that the following most general form of the Erd\H os-Hajnal problem is still open:

\begin{prob} Does every uncountably chromatic (or $\omg$-chromatic) graph contain an $\oo$-connected subset?
\end{prob} 

We know, by Theorem \ref{mainthm}, that this $\oo$-connected set can only be countable in some cases however excluding countable $\oo$-connected subsets seems to be a very hard problem.

\section{Acknowledgments}

I would like to thank Chris Eagle for enlightening conversations in the early stages of this project, as well as William Weiss for his help in clarifying my arguments.

\end{document}